\newtheorem{thm}{Theorem}[section]
\newtheorem{theorem}{Theorem}[section]
\newtheorem{lemma}[thm]{Lemma}
\newtheorem{proposition}[thm]{Proposition}
\newtheorem{observation}[thm]{Observation}
\newtheorem{corollary}[thm]{Corollary}
\newtheorem{definition}[thm]{Definition}
\newtheorem{remark}[thm]{Remark}
\newtheorem{example}[thm]{Example}
\DeclareMathOperator{\diam}{diam}
\DeclareMathOperator{\orb}{Orb}
\newcommand{\eps}{\varepsilon}
\newcommand{\N}{\mathbb{N}}
\newcommand{\f}{f\colon [0,1]\to[0,2]}
\newcommand{\Fix}{\mathrm{Fix}}
\begin{document}

\title{On zero entropy homeomorphisms of the pseudo-arc}

\author{Jernej \v{C}in\v c}

\subjclass[2010]{37E05, 37B45}
\keywords{pseudo-arc, inverse limit, topological entropy}
\begin{abstract}
	In this paper we study interval maps $f$ with zero topological entropy that are crooked; i.e. whose inverse limit with $f$ as the single bonding map is the pseudo-arc. We show that there are uncountably many pairwise non-conjugate zero entropy crooked interval maps with different sets of fixed points.
	We also show that there are uncountably many zero entropy crooked maps that are pairwise non-conjugate and have exactly two fixed points. Furthermore, we provide a characterization of crooked interval maps that are under or above the identity diagonal.
\end{abstract}
\maketitle

\section{Introduction}
The pseudo-arc is a one-dimensional compact connected metric space (a \emph{continuum}) with extraordinary properties. It is homemorphic to each of its non-degenerate subcontinua and for any two points in the pseudo-arc there is a homemorphism of the pseudo-arc taking one to the other yet it contains no arcs. 
	Since it is so intrinsically complicated it was initially thought
that the topological complications would restrict the type of dynamics
	it could support. However, it was recently shown by Boro\'nski, Oprocha and the author \cite{BCO} that for any non-negative real value $r$ there is a homeomorphism of the pseudo-arc with topological entropy $r$. Nonetheless, the question about dynamical richness of pseudo-arc homeomorphisms with a specific value of topological entropy remains to be explored.

The aim of this paper is to study dynamical variety of maps with  topological entropy $0$ that give as single bonding maps the pseudo-arc in the inverse limit. Henderson in \cite{He} provided a construction to obtain such a map, we call it here the Henderson map $f_H$, see Example~\ref{ex:Hen} for more details. However, it is a priori unclear if the construction given in \cite{He} can be modified in order to obtain interval maps non-conjugate to $f_H$ whose inverse limit is the pseudo-arc and have topological entropy $0$. This question has been recently circulating in the community working on the interactions between Continuum Theory and Topological Dynamical Systems; in the paper of Drwięga and Oprocha \cite{DO} (see the end of page 197) the authors mention that there are no other known techniques to construct such maps and thus implicitly pose this question.

We answer the question in the affirmative and in particular show that there are uncountably many such pairwise non-conjugate interval maps with different sets of fixed points as well as with exactly two fixed points. Moreover, we provide a characterization of crooked interval maps that are under the identity diagonal.
Our characterization of crooked under diagonal maps says it is enough to require crookedness in an arbitrary small neighbourhood $U$ of point $1$ and in addition to find a point $x\in U$ that is attracted to $0$  by this map. Analogous results also hold for maps that are above the identity diagonal.

In a sense, this paper also initiates the study that is complementary to the study in \cite{CO}. There, Oprocha and the author prove that generic maps in the closure of the set of all maps with dense set of periodic points as well as in the set of all Lebesgue measure-preserving interval maps as single bonding maps give the pseudo-arc in the inverse limit. In that context, the topological entropy of maps is known to be $\infty$ (see e.g. Proposition 26 in \cite{BT} and Remark in \cite{BCOT}).  Mouron \cite{Mouron} has proved that crooked maps can have topological entropy $\infty$ or $0$. 

 Our results give novel implications also from the following perspective from Surface Dynamics. Barge and Martin \cite{BM} provided a technique to embed the inverse limit space of any interval map as the single bonding map as the attractor of some planar homeomorphism. Now take any map $f\in \mathcal T$ from Theorem~\ref{thm:non-conjstrobedient}; since $f$ is crooked, the attractor using the technique from \cite{BM} will be the pseudo-arc. But since any two maps from $\mathcal{T}$ are not-conjugate, it follows that the two induced embeddings of the pseudo-arc are dynamically non-equivalent (see Definition 4.1 from \cite{CO}).	Thus we argued that Theorem~\ref{thm:non-conjstrobedient} gives uncountably many  dynamically non-conjugate embeddings of the pseudo-arc in the plane with planar homeomorphisms having topological entropy $0$.

 Furthermore, our study can be seen as the continuation of the study towards understanding the interaction between the dynamics of  bonding maps and topological complexity of their inverse limits. More precisely, the question that propelled a significant amount of research in the last three decades was to characterize the conditions on factor spaces and bonding maps to ensure that their corresponding inverse limit spaces contain indecomposable subcontinua.
  One direction of this line of study culminated in a very nice paper of Darji and Kato \cite{DK} where the authors, using recent developments in local entropy theory, show that inverse limit of a graph map with positive topological entropy contains an indecomposable subcontinuum. The other direction of this line of research has, however, not been explored to a great extent and our research can be thought as the first step in that direction. Our paper can also be treated as a continuation of the work of Greenwood and \v Stimac \cite{GS} where the authors characterized interval maps that give arcs in the inverse limit as single bonding maps.
 
 Let us give a brief outline of the paper. In Section~\ref{sec:preliminaries} we provide preliminary definitions used throughout the paper. In Section~\ref{sec:obedient} we define the collection of maps that we will work with in the rest of the paper, provide initial examples that motivate our study and explain our approach. 
  Finally, in Section~\ref{sec:non-conjugate} we prove the main results about non-conjugate under and above identity diagonal interval maps with zero topological entropy that give pseudo-arc in the inverse limit as well as provide their characterization.

\section{General preliminaries}\label{sec:preliminaries}

We denote the set of {\em natural numbers}  by $\N:=\{1,2,3,\ldots\}$, and let $\N_0=\N\cup\{0\}$.
By $I$ let us denote the {\em unit interval} $[0,1]\subset \mathbb{R}$. A {\em map} $f\colon I\to I$ is a continuous surjective function. Denote the {\em set of interval maps} by $C(I)$. We equip $C(I)$ with the metric of uniform convergence. Let $f^n=f\circ\ldots\circ f$ denote the {\em $n$-th iterate} of $f$. 
We call a point $c\in (0,1)$ a {\em critical point} of $f\colon I\to I$ if for every open $J\ni c$, $f|_J$ is not one-to-one. In particular, we say that $c$ is a {\em strict critical point} if in addition there exists an interval $c\in (t_1,t_2)\subset J$ such that $f|_{(t_1,c)}$ and $f|_{(c,t_2)}$ are one-to-one.
An interval map $f:I\to I$ is called {\em piecewise monotone}, if $f$ has finitely many critical points. Given a map $f\colon I\to I$, the {\em orbit of $x\in I$ under $f$} is the set $$\orb(x,f):=\{f^n(x): n\geq 0\}.$$

We say that a point $x\in I$ is {\em $f$-attracted} to $y\in I$ if $f^n(x)\to y$ as $n\to\infty$. 
The {\em set of fixed points} of $f$ will be denoted by 
$$\Fix(f):=\{y\in I: f(y)=y\}.$$ 
We say that maps $f,g\in C(I)$ are {\em conjugate}, if there exists a homeomorphism $h:I\to I$ such that $g=h\circ f\circ h^{-1}$.

A {\em continuum} is a non-empty, compact, connected, metric space. In the definition of inverse limit space we can assume without loss of generality that the bonding functions are surjective.
Let $\{I_i\}_{i\in\N}$ be a collection of closed intervals from $I$.   An {\em inverse sequence of $\{I_i\}_{i\in\N}$}  is a sequence $(I_i,f_i)_{i\in \N}$ where  $f_i\colon I_{i+1}\to I_{i}$, $i\in\N$.
 The {\em inverse limit} is defined by:
$$\underleftarrow{\lim}(I_i, f_i)=\{(x_1, x_2, \ldots): f_i(x_{i+1})=x_{i}, i\in\N\}\subset  I_1\times I_2\times\ldots.$$
Equipped with the {\em product topology}, $\underleftarrow{\lim}\{I_i, f_i\}$ is a continuum, and it is {\em arc-like}, i.e. for every $\eps>0$ there exists an $\eps$-mapping onto an interval. For $\eps>0$, a map $g$ from $X$ onto $Y$ is an {\em $\eps$-mapping} if for every $y\in Y$, the diameter of $g^{-1}(y)$ is less than $\eps$.
The {\em coordinate projections} are defined 
by $\pi_i\colon \underleftarrow{\lim}(I_i, f_i)\to I$, $\pi_i((x_1, x_2, \ldots))=x_i$, for $i\in\N$ and they are all continuous.

If $f_i=f$ and $I_i=I$ for all $i\in\N$, we denote the inverse limit by $\hat I:=\underleftarrow{\lim}(I, f)$. Then there exists a 
homeomorphism $\hat f\colon \hat I\to \hat I$, given by $$\hat f((x_0, x_1, \ldots)):=(f(x_0), f(x_1), f(x_2), \ldots)=(f(x_0), x_0, x_1, \ldots).$$
It is called the {\em natural extension} of $f$ (or sometimes {\em shift homeomorphism}).

\section{Under diagonal, above diagonal maps and preliminary examples}\label{sec:obedient}
\begin{definition}\label{def:obedient}
			{\upshape
	 A map $f:I\to I$ is called \emph{under diagonal} if $f(x)\leq x$ for every $x\in [0,1]$. 
A map $f:I\to I$ is called \emph{$0$-attracted} if $f(0)=0$, $f(1)=1$ and $f(x)<x$ for every $x\in (0,1)$. Denote the set of under diagonal interval maps by $C_{ud}(I)$ and the set of $0$-attracted interval maps by $C_{0}(I)\subset C_{ud}(I)$.
We also define \emph{above diagonal} interval maps $f$ by $f(x)\geq x$ for every $x\in [0,1]$.  A map is called {\em $1$-attracted} if additionally $f(x)>x$ for every $x\in (0,1)$. We denote the set of above diagonal interval maps by $C_{ad}(I)$ and the set of $1$-attracted maps by $C_{1}(I) \subset C_{ud}(I)$. }
\end{definition}

The following theorem shows that the inverse limits with piecewise monotone under diagonal maps are topologically unique. This follows from Theorem B8 from \cite{AC} since by the definition of under diagonal maps, for every $x\in I$ there exists $p\in \Fix(f)$ such that $f^{n}(x) \to p$ as $n\to \infty$ (in the terminology of \cite{AC} every $x\in I$ is $f$-attracted to some $p\in \Fix(f)$).

\begin{theorem}\label{thm:pwnarc}
	If $f:I\to I$ is under diagonal and piecewise monotone then $\hat{I}_f$ is homeomorphic to an arc.
\end{theorem}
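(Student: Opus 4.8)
The plan is to show that under the stated hypotheses the inverse limit $\hat I_f$ is arc-like (which it is automatically, being an inverse limit of intervals) and contains no indecomposable subcontinuum, and then invoke the cited Theorem B8 from \cite{AC} directly. The key observation, already flagged in the paragraph preceding the statement, is that for an under diagonal map every point $x\in I$ satisfies $f^n(x)\to p$ for some $p\in\Fix(f)$: indeed the sequence $(f^n(x))_{n\ge 0}$ is non-increasing and bounded below by $0$, hence converges to some limit $p$, and by continuity $f(p)=p$. So every point of $I$ is $f$-attracted to a fixed point, which is exactly the hypothesis under which Theorem B8 of \cite{AC} concludes that $\hat I_f$ is an arc. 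Thus the proof is essentially a one-line verification plus citation; I would write it that way.

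If instead one wanted a self-contained argument (in case the reader does not want to unwind the machinery of \cite{AC}), I would proceed as follows. First, since $f$ is piecewise monotone and under diagonal, $\Fix(f)$ is a finite union of points and closed intervals; list the maximal intervals/points of $\Fix(f)$ and the complementary open intervals $(a_j,b_j)$ on which $f(x)<x$ strictly. On each such complementary interval, monotonicity of $f^n(x)$ in $n$ together with piecewise monotonicity of $f$ lets one build, for each $\eps>0$, a finite chain cover of $\hat I_f$ whose links are "thin in the first few coordinates": the point is that after enough iterates every point is squeezed near $\Fix(f)$, and on a neighbourhood of $\Fix(f)$ the map is eventually close to a monotone (indeed order-preserving) map, which produces arcs rather than folds. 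Concretely I would show by induction on the number of critical points that $\hat I_f$ is homeomorphic to $\hat I_{g}$ where $g$ is obtained by "straightening" the lowest lap, using that a fold that never gets revisited (because orbits only move downward) does not create a point with a non-planar or indecomposable local structure.

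The main obstacle in the self-contained route is controlling the folds: piecewise monotone under diagonal maps can still have critical points, and a priori a critical value could sit at a height that is revisited by orbits coming from below, creating nontrivial identifications in the inverse limit. The resolution is precisely that "under diagonal" forbids this — an orbit passing through a neighbourhood of a critical point moves strictly toward a fixed point below it and never returns — so each fold is "used at most once" along any backward orbit, which is exactly the combinatorial condition that forces the inverse limit to be chainable with no pattern of links forcing indecomposability. Making this rigorous is the content of Theorem B8 of \cite{AC}, so in the paper I would simply cite it; the contribution of the present statement is to record the (easy) verification that under diagonal piecewise monotone maps fall within its scope. Hence I expect the final written proof to be short: verify that every point is $f$-attracted to a fixed point, then apply \cite[Theorem B8]{AC}.
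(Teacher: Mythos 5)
Your first paragraph is exactly the paper's argument: the paper justifies the theorem by noting that every point of $I$ is $f$-attracted to some $p\in\Fix(f)$ (since $(f^n(x))_n$ is non-increasing) and then citing Theorem B8 of \cite{AC}. Your additional self-contained sketch is not needed, and your stated final form of the proof coincides with what the paper does.
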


\begin{definition}\label{def:crooked}
		{\upshape
	Let $f\in C(I)$ and let $a,b\in I$ and let $\delta>0$. We say that $f$ is {\em $\delta$-crooked between $a$ and $b$} if for every two points $c, d \in I$ such that $f(c) = a$ and $f(d) = b$, there is a point $c'$ between $c$ and $d$ and there is a point $d'$ between $c'$ and $d$ such that $|b - f(c')| < \delta$ and $|a - f(d')| < \delta$. We will say that $f$ is {\em $\delta$-crooked} if it is $\delta$-crooked between every pair of points from $I$.  
	We will simply say that $f$ is {\em crooked}, if for every $\delta>0$ there exists $n\in \N$ such that $f^n$ is $\delta$-crooked.}
\end{definition}

From \cite[Theorem 3.5]{MK} we have the following characterization.

\begin{proposition}\label{prop:charpseudo-arc}
	Space $\hat I_f$ is the pseudo-arc if and only if $f$ is crooked.
\end{proposition}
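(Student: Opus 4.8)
This is a known characterization; here is the route I would take to prove it. The key is Bing's classical description of the pseudo-arc as the unique arc-like (chainable) hereditarily indecomposable continuum, together with the fact that an arc-like continuum is the pseudo-arc precisely when it admits a defining sequence of open chain covers $\mathcal{C}_1\succ\mathcal{C}_2\succ\cdots$ with $\mesh(\mathcal{C}_k)\to 0$ in which each $\mathcal{C}_{k+1}$ is \emph{crooked} in $\mathcal{C}_k$ (between two links that are at least two apart, the refining chain must first return to the penultimate link before reaching the last one). Since the excerpt already records that $\hat I_f$ is arc-like, the whole problem reduces to matching the combinatorial notion of $\delta$-crookedness of $f^n$ from Definition~\ref{def:crooked} with crookedness of chains.

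To set up the dictionary: for a fine partition of $I$ into intervals, the preimages $\pi_n^{-1}(J)$ of partition intervals $J$ (grouped so as to form a genuine chain with no superfluous links) give an open chain cover $\mathcal{C}^{(n)}$ of $\hat I_f$, and since $\pi_n=f^m\circ\pi_{n+m}$, the cover $\mathcal{C}^{(n+m)}$ coming from a sufficiently fine partition in coordinate $n+m$ refines $\mathcal{C}^{(n)}$, with the nesting pattern of links determined entirely by how the graph of $f^m$ folds over the partition. A short computation should show: if $\delta$ is small relative to the $\mesh$ of the partition defining $\mathcal{C}^{(n)}$ and $f^m$ is $\delta$-crooked, then $\mathcal{C}^{(n+m)}$ is crooked in $\mathcal{C}^{(n)}$; conversely, if $\mathcal{C}^{(n+m)}$ is crooked in $\mathcal{C}^{(n)}$ for partitions of mesh tending to $0$, then $f$ is crooked.

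For the implication ``$f$ crooked $\Rightarrow$ $\hat I_f$ pseudo-arc'', I would fix $\eps>0$, pick a partition fine enough that all sets $\pi_n^{-1}(J)$ have diameter $<\eps$, choose $\delta$ small relative to its mesh, use crookedness of $f$ to find $m$ with $f^m$ being $\delta$-crooked, and pass to the refined chain coming from coordinate $n+m$; iterating this along a subsequence of coordinates and shrinking meshes produces the desired defining sequence of successively crooked chains, so Bing's criterion identifies $\hat I_f$ with the pseudo-arc. For the converse, assume $\hat I_f$ is the pseudo-arc (hence hereditarily indecomposable) but $f$ is not crooked: there is $\delta>0$ so that no $f^n$ is $\delta$-crooked, i.e.\ for each $n$ there are $a_n,b_n$ and $f^n$-preimages $c_n,d_n$ witnessing the failure of the condition in Definition~\ref{def:crooked}. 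Transporting this uniform failure into the inverse limit yields a subcontinuum admitting chain covers that remain non-crooked above a fixed mesh, contradicting — again via Bing — the fact that every subcontinuum of the pseudo-arc is itself a pseudo-arc.

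The main obstacle is the bookkeeping in the dictionary above: converting the two metric inequalities $|b-f(c')|<\delta$ and $|a-f(d')|<\delta$ into the order-theoretic statement that the refining chain zigzags back past the ``$b-1$'' link and then past the ``$a+1$'' link, while simultaneously making honest chains out of the collections of preimage intervals (consecutive links must meet, non-consecutive ones must be disjoint). Calibrating $\delta$ against the mesh and controlling these link adjacencies is the technical heart of the argument; once the dictionary is in place, both directions follow as direct applications of Bing's characterization.
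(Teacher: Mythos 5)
The paper offers no proof of Proposition~\ref{prop:charpseudo-arc}: it is imported verbatim from \cite[Theorem 3.5]{MK}, with Proposition~\ref{prop:pseudo-arccharseq} (from \cite{LM}) as the inverse-sequence analogue, so you are reconstructing the cited literature rather than competing with an argument in the text. Your architecture --- Bing's identification of the pseudo-arc as the unique non-degenerate hereditarily indecomposable chainable continuum, together with the dictionary between $\delta$-crookedness of $f^m$ and crookedness of the chain of links $\pi_{n+m}^{-1}(J')$ inside the chain of links $\pi_{n}^{-1}(J)$ --- is indeed how the cited result is proved, and your forward direction is sound modulo the bookkeeping you yourself flag. (Two small points there: you need $n$ large as well as a fine partition to make the links small, since the coordinates beyond $n$ are uncontrolled in the product metric and contribute roughly $2^{-n}$ to the diameter of $\pi_n^{-1}(J)$; and you need non-degeneracy of $\hat I_f$, which follows from surjectivity of $f$, before invoking Bing's uniqueness theorem.)

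The converse as written has a genuine gap. The version of Bing's criterion you state --- that the pseudo-arc admits \emph{some} defining sequence of successively crooked chains --- is not contradicted by exhibiting non-crooked refinements: a chain cover of the pseudo-arc always has refinements that fail to be crooked in it (it refines itself, for instance), so ``a subcontinuum admitting chain covers that remain non-crooked above a fixed mesh'' does not, by itself, clash with hereditary indecomposability. To close the argument you need one of two genuinely stronger ingredients. Either invoke the strong form of Bing's refinement theorem --- in a hereditarily indecomposable chainable continuum, \emph{every} chain cover of sufficiently small mesh refining a given chain $\mathcal{C}$ is crooked in $\mathcal{C}$ --- applied to $\mathcal{C}^{(n)}$ and $\mathcal{C}^{(n+m)}$ with $n$ fixed and $m\to\infty$; this is true but is not the statement you quoted, and it is essentially equivalent to the hard direction you are trying to prove. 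Or, as in \cite{MK}, construct from the failure of $\delta$-crookedness an explicit decomposable subcontinuum of $\hat I_f$ (two intersecting subcontinua neither containing the other); since the witnesses $a_m,b_m,c_m,d_m$ vary with $m$, this requires a compactness/diagonal argument to extract a single limiting configuration. Neither step is routine bookkeeping, and as stated the backward implication is not established.
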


 In particular, Theorem~\ref{thm:pwnarc} says that crooked interval maps cannot be piecewise monotone.

Next examples show that even for $0$-attracted maps there is no connection between interval maps with infinitely many strict critical points and crookedness. With them we want to emphasize the topological variety of $\hat I_f$ despite simple dynamics of $f$.

\begin{example}
			{\upshape
There exists a $0$-attracted map $f\in C(I)$ with infinitely many strict critical points so that $\hat I_f$ is homeomorphic to an arc,  see Figure~\ref{fig:arc}. Indeed, this follows from an extension of Lemma B.5 in \cite{AC} since every $x\in [0,1)$ is $f$-attracted to $0$, see Remark B.7 in \cite{AC}. }
\end{example}

\begin{figure}
	\begin{tikzpicture}[scale=4]
	\draw(0,0)--(0,1)--(1,1)--(1,0)--(0,0);
	\draw[dashed] (0,0)--(1,1);
	\draw[thick] (0,0)--(0.3,0.05)--(0.32,0.3)--(0.34,0.1)--(0.36,0.3)--(0.37,0.18)--(0.38,0.22);
	\draw[thick] (1,1)--(0.5,0.35)--(0.48,0.05)--(0.46,0.31)--(0.44,0.1)--(0.43,0.25)--(0.42,0.18);
	\node[circle,fill, inner sep=1] at (0.4,0.2){};
	\end{tikzpicture}
	\caption{Graph of a $0$-attracted interval map $f$ with infinitely many critical points for which the inverse limit is an arc.}
	\label{fig:arc}
\end{figure}

The following example shows that $0$-attracted ($1$-attracted) maps can have inverse limits that are neither pseudo-arc nor arc.

A {\em double $\sin(1/x)$ continuum} is a continuum which consisting of two rays (homeomorphic image of $[0,1)$) that both accumulate on a unique arc, see the right picture on Figure~\ref{fig:sin1/x}.
\begin{example}\label{ex:doublesin(x)}
			{\upshape
	There exists a $1$-attracted interval map $f$ with infinitely many critical points so that $\hat I_f$ is not homeomorphic to an interval and $\hat I_f$ is a union of countably infinitely many double $\sin(1/x)$ continua, see Figure~\ref{fig:sin1/x}. An important feature of map $f$ is that the absolute value of the slope of map restricted to intervals $[a_1,b_1]$ is $1$ and for every $i\in \N$ there is a subinterval of $[a_{i+1},b_{i+1}]$ that maps onto $[a_i,b_i]$ and that $\diam(f^{i-1}([a_i,b_i]))=\diam([f(a_1),f(b_1)])$ (compare this with Example 4.12 in \cite{AC}). The subcontinuum $H:=\underleftarrow{\lim}([a_i,b_i],f|_{[a_i,b_i]})$ is homeomorphic to a double $\sin(1/x)$ continuum (see the right picture on Figure~\ref{fig:sin1/x}). Also ${\hat f}^{n}(H)$ will be double $\sin(1/x)$ continuum for every $n\in \N$.}
\end{example}

  Requiring in Example~\ref{ex:doublesin(x)} that $\diam(f^{i-1}([a_i,b_i]))\to 0$ as $i\to \infty$ would yield that $\hat I_f$ is an arc (see Example 4.12 in \cite{AC}).

\begin{figure}
	
	\begin{tikzpicture}[scale=6.3]
	\vspace{-20pt}
	\draw[thick] (1,1)--(8/10,1)--(0.78,0.92)--(0.7,0.99)--(0.68,0.9)--(0.6,0.8)--(0.58,0.705)--(0.52,0.78)--(0.5,0.68)--(0.43,0.6)--(0.405,0.525)--(0.375,0.575)--(0.35,0.5)--(0.29,0.43)--(0.262,0.365)--(0.24,0.41)--(0.22,0.35)--(0.18,0.29)--(0.161,0.245)--(0.153,0.27)--(0.14,0.22)--(0.115,0.18);
	\draw[dashed] (0.9,0.9)--(0.8,0.9);
	\draw[dashed](0.14,0.14)--(0.14,0.22)--
	(0.22,0.22)--(0.22,0.35)--(0.35,0.35)--(0.35,0.5)--(0.5,0.5)--(0.5,0.68)--(0.68,0.68)--(0.68,0.9)--(0.9,0.9)--(0.9,0);
	\draw[dashed](0.115,0.115)--
	(0.115,0.18)--(0.18,0.18)--(0.18,0.29)--(0.29,0.29)--(0.29,0.43)--(0.43,0.43)--(0.43,0.6)--(0.6,0.6)--(0.6,0.8)--(0.8,0.8)--(0.8,1);
	\draw[dashed] (0,0)--(1,1);
	\draw (0,0) -- (0,1) -- (1,1) -- (1,0) -- (0,0);
	\draw[dashed] (0.8,0)--(0.8,0.8); 
	\draw[dashed] (0.68,0)--(0.68,0.68); 
	\draw[dashed] (0.6,0)--(0.6,0.6); 
	\draw[dashed] (0.5,0)--(0.5,0.5); 
	\draw[dashed] (0.43,0)--(0.43,0.43); 
	\draw[dashed] (0.35,0)--(0.35,0.35); 
	\draw[dashed] (0.29,0)--(0.29,0.29); 
	\draw[dashed] (0.22,0)--(0.22,0.22); 
	\draw[dashed] (0.18,0)--(0.18,0.18); 
	\draw[dashed] (0.14,0)--(0.14,0.14); 
	\draw[dashed] (0.115,0)--(0.115,0.115);

	\draw[dashed] (0,0.9)--(0.9,0.9); 
	\draw[dashed] (0,0.8)--(0.8,0.8); 
	\draw[dashed] (0,0.68)--(0.68,0.68); 
	\draw[dashed] (0,0.6)--(0.6,0.6); 
	\draw[dashed] (0,0.5)--(0.5,0.5); 
	\draw[dashed] (0,0.43)--(0.43,0.43); 
	\draw[dashed] (0,0.35)--(0.35,0.35); 
	\draw[dashed] (0,0.29)--(0.29,0.29); 
	\draw[dashed] (0,0.22)--(0.22,0.22); 
	\draw[dashed] (0,0.18)--(0.18,0.18);

	\draw[dashed] (0.68,0.9)--(0.68,1); 
	\draw[dashed] (0.5,0.68)--(0.5,0.8); 
	\draw[dashed] (0.35,0.5)--(0.35,0.6); 
	\draw[dashed] (0.22,0.35)--(0.22,0.43); 
	\draw[dashed] (0.14,0.22)--(0.14,0.29);
	
	\node[circle,fill, inner sep=1] at (1,1){};
	\node[circle,fill, inner sep=1] at (0.9,0.9){};
	\node[circle,fill, inner sep=1] at (0.8,0.8){};
	\node[circle,fill, inner sep=1] at (0.77,0.77){};
	\node[circle,fill, inner sep=1] at (0.71,0.71){};
	\node[circle,fill, inner sep=1] at (0.68,0.68){};
	\node[circle,fill, inner sep=1] at (0.6,0.6){};
	\node[circle,fill, inner sep=1] at (0.57,0.57){};
	\node[circle,fill, inner sep=1] at (0.53,0.53){};
	\node[circle,fill, inner sep=1] at (0.5,0.5){};
	\node[circle,fill, inner sep=1] at (0.43,0.43){};
	\node[circle,fill, inner sep=1] at (0.41,0.41){};
	\node[circle,fill, inner sep=1] at (0.375,0.375){};
	\node[circle,fill, inner sep=1] at (0.35,0.35){};
	\node[circle,fill, inner sep=1] at (0.29,0.29){};
	\node[circle,fill, inner sep=1] at (0.27,0.27){};
	\node[circle,fill, inner sep=1] at (0.24,0.24){};
	\node[circle,fill, inner sep=1] at (0.22,0.22){};
	\node[circle,fill, inner sep=1] at (0.18,0.18){};
	\node[circle,fill, inner sep=1] at (0.14,0.14){};
	\node[circle,fill, inner sep=1] at (0.115,0.115){};
	\node[circle,fill, inner sep=1] at (1,0){};
	\node[circle,fill, inner sep=1] at (0.99,0){};
	\node[circle,fill, inner sep=1] at (0.91,0){};
	\node[circle,fill, inner sep=1] at (0.9,0){};
	\node[circle,fill, inner sep=1] at (0.8,0){};
	\node[circle,fill, inner sep=1] at (0.77,0){};
	\node[circle,fill, inner sep=1] at (0.71,0){};
	\node[circle,fill, inner sep=1] at (0.68,0){};
	\node[circle,fill, inner sep=1] at (0.6,0){};
	\node[circle,fill, inner sep=1] at (0.5,0){};
	\node[circle,fill, inner sep=1] at (0.43,0){};
	\node[circle,fill, inner sep=1] at (0.35,0){};
	\node[circle,fill, inner sep=1] at (0.29,0.){};
	\node[circle,fill, inner sep=1] at (0.22,0){};
	\node[circle,fill, inner sep=1] at (0.18,0){};
	\node[circle,fill, inner sep=1] at (0.14,0){};
	\node[circle,fill, inner sep=1] at (0.115,0){};
	\node[circle,fill, inner sep=1] at (0.57,0){};
	\node[circle,fill, inner sep=1] at (0.53,0){};
	\node[circle,fill, inner sep=1] at (0.41,0){};
	\node[circle,fill, inner sep=1] at (0.375,0.){};
	\node[circle,fill, inner sep=1] at (0.27,0){};
	\node[circle,fill, inner sep=1] at (0.24,0){};
	\node[circle,fill, inner sep=1] at (0.165,0){};
	\node[circle,fill, inner sep=1] at (0.155,0){};
	\node at (-0.05,1) {\scriptsize $1$};
	\node at (0,-0.03) {\scriptsize $0$};
	\node at (0.15,-0.03) {\scriptsize $a'_5$};
	\node at (0.19,-0.03) {\scriptsize $b'_5$};
	\node at (0.23,-0.03) {\scriptsize $a'_{4}$};
	\node at (0.3,-0.03) {\scriptsize $b'_4$};
	\node at (0.36,-0.03) {\scriptsize $a'_3$};
	\node at (0.44,-0.03) {\scriptsize $b'_{3}$};
	\node at (0.5,-0.03) {\scriptsize $a'_2$};
	\node at (0.547,-0.035) {\scriptsize $a_{2}$};
	\node at (0.58,-0.03) {\scriptsize $b_{2}$};
	\node at (0.62,-0.03) {\scriptsize $b'_{2}$};
	\node at (0.68,-0.03) {\scriptsize $a'_{1}$};
	\node at (0.715,-0.035) {\scriptsize $a_{1}$};
	\node at (0.775,-0.03) {\scriptsize $b_{1}$};
	\node at (0.81,-0.03) {\scriptsize $b'_{1}$};
	\node at (0.9,-0.03) {\scriptsize $a'_{0}$};
	\node at (0.94,-0.03) {\scriptsize $b_{0}$};
	\node at (0.985,-0.035) {\scriptsize $a_{0}$};
	\node at (1.02,-0.03) {\scriptsize $b'_{0}$};
	\end{tikzpicture}
	\hspace{20pt}
	\raisebox{2cm}{
		\begin{tikzpicture}[scale=5]
		\draw (0.1,0.75)--(0.6,0.75);
		\draw (0.4,0.62)--(0.6,0.62);
		\draw (0.4,0.56)--(0.6,0.56);
		\draw (0.4,0.53)--(0.6,0.53);
		\draw (0.4,0.51)--(0.6,0.51);
		\draw[domain=270:450] plot ({0.6+0.065*cos(\x)}, {0.685+0.065*sin(\x)});
		\draw[domain=90:270] plot ({0.4+0.03*cos(\x)}, {0.59+0.03*sin(\x)});
		\draw[domain=270:450] plot ({0.6+0.015*cos(\x)}, {0.545+0.015*sin(\x)});
		\draw[domain=90:270] plot ({0.4+0.01*cos(\x)}, {0.52+0.01*sin(\x)});
		\draw[thick] (0.4,0.47)--(0.6,0.47);
		\node[circle,fill, inner sep=1] at (0.4,0.47){};
		\node[circle,fill, inner sep=1] at (0.6,0.47){};
		\begin{scope}[yscale=-1,xscale=-1,yshift=-0.94cm,xshift=-1cm]
		\draw (0.1,0.75)--(0.6,0.75);
		\draw (0.4,0.62)--(0.6,0.62);
		\draw (0.4,0.56)--(0.6,0.56);
		\draw (0.4,0.53)--(0.6,0.53);
		\draw (0.4,0.51)--(0.6,0.51);
		\draw[domain=270:450] plot ({0.6+0.065*cos(\x)}, {0.685+0.065*sin(\x)});
		\draw[domain=90:270] plot ({0.4+0.03*cos(\x)}, {0.59+0.03*sin(\x)});
		\draw[domain=270:450] plot ({0.6+0.015*cos(\x)}, {0.545+0.015*sin(\x)});
		\draw[domain=90:270] plot ({0.4+0.01*cos(\x)}, {0.52+0.01*sin(\x)});
		\end{scope}
		\end{tikzpicture}}
	\caption{Left: graph of an example of a  $1$-attracted interval map $f$ for which the inverse limit has (double) $\sin(1/x)$-continua. Right: double $\sin(1/x)$-continuum.}
	\label{fig:sin1/x}
\end{figure}

\begin{example}\label{ex:Hen}
	{\upshape
		Henderson's map $f_H:I\to I$ defined in \cite{He} is an example of a crooked $0$-attracted map $f\in C(I)$ with infinitely many strict critical points. 
		Its construction can be roughly described as starting with a map $f(x)=x^2$ and perturbing its map with infinitely many $v$-shaped notches which accumulate in a particular fashion on the point $(1,1)\subset I\times I$ (see Figure~\ref{fig:Hen}).}
\end{example}
\begin{figure}
	\begin{tikzpicture}[scale=4]
	\draw(0,0)--(0,1)--(1,1)--(1,0)--(0,0);
	\draw[dashed] (0,0)--(1,1);
	\draw[domain=0:3/4, thick, smooth, variable=\x] plot ({\x},{\x*\x});
	\draw[thick](3/4,9/16)--(0.77,0.6)--(0.8,0.55)--(5/6,25/36);
	\draw[domain=5/6:0.86, thick, smooth, variable=\x] plot ({\x},{\x*\x});
	\node[circle,fill, inner sep=0.8] at (0.72,0.5184){};
	\node[circle,fill, inner sep=0.8] at (0.77,0.6){};
	\node[circle,fill, inner sep=0.8] at (0.8,0.55){};
	\node[circle,fill, inner sep=0.8] at (5/6,25/36){};
	\node[circle,fill, inner sep=0.8] at (0.935,0.87){};
	\node[circle,fill, inner sep=0.8] at (0.96,0.91){};
	\node[circle,fill, inner sep=0.8] at (0.981,0.95){};
	\draw[thick](0.86,0.7396)--(0.875,0.71)--(8/9,64/81);
	\draw[domain=8/9:0.92, thick, smooth, variable=\x] plot ({\x},{\x*\x});
	\node[circle,fill, inner sep=0.8] at (0.86,0.7396){};
	\node[circle,fill, inner sep=0.8] at (0.875,0.71){};
	\node[circle,fill, inner sep=0.8] at (8/9,64/81){};
	\end{tikzpicture}
	\caption{Graph of the map $f_{H}$ from \cite{He}.}
	\label{fig:Hen}
\end{figure}

\section{Pairwise non-conjugate crooked maps derived from $f_H$}\label{sec:non-conjugate}

Note that crookedness is preserved under topological conjugacies, i.e., provided $f\in C(I)$ is crooked and $g\in C(I)$  is conjugate to $f$, then $g$ is also crooked. This indeed follows immediately from Proposition~\ref{prop:charpseudo-arc} and the fact that if $f$ is conjugate to $g$, then $\hat I_f$ is homeomorphic to $\hat I_g$. First let us state a standard observation.

\begin{observation}\label{obs:subcontinua}
			{\upshape
	Let $H$ be a subcontinuum of $\hat I_f$. Then there exists an inverse sequence $(J_i,f_i)_{i\in \N}$ where $J_i\subset I$ are closed intervals or points and $f_i=f|_{J_i}$ continuous surjections so that $H=\underleftarrow{\lim}(J_i, f_i)$. In particular, if $H$ is non-degenerate then $J_i$ are non-degenerate for all but finitely many $i\in \N$. }
\end{observation}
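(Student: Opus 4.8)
The plan is to take the coordinate projections $\pi_i\colon \hat I_f\to I$ and set $J_i\defeq \pi_i(H)$ and $f_i\defeq f|_{J_{i+1}}$, and then verify that this inverse sequence reconstructs $H$. First I would note that each $\pi_i$ is continuous and $H$ is a continuum, so $J_i=\pi_i(H)$ is a subcontinuum of $I$, hence a closed interval or a single point. Next I would check that $f(J_{i+1})=J_i$: for every $(x_1,x_2,\dots)\in H\subset \hat I_f$ the relation $f(x_{i+1})=x_i$ gives $x_i\in f(J_{i+1})$, and conversely $f(J_{i+1})\subseteq J_i$, so $J_i=f(J_{i+1})$. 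Thus $f_i=f|_{J_{i+1}}\colon J_{i+1}\to J_i$ is a well-defined continuous surjection and $(J_i,f_i)_{i\in\N}$ is a legitimate inverse sequence.

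The heart of the argument is the equality $H=\underleftarrow{\lim}(J_i,f_i)$. The inclusion $\subseteq$ is immediate, since every point of $H$ has $i$-th coordinate in $J_i$ by definition and satisfies the bonding relations. For the reverse inclusion, given $\bar x=(x_1,x_2,\dots)$ in the inverse limit, for each $n$ pick $\bar y^{(n)}\in H$ with $\pi_n(\bar y^{(n)})=x_n$ (possible since $x_n\in J_n=\pi_n(H)$). Applying $f$ repeatedly and using the bonding relations in both $\hat I_f$ and the sequence $(J_i,f_i)$, one gets $\pi_i(\bar y^{(n)})=x_i$ for all $i\le n$; hence $\bar y^{(n)}\to\bar x$ coordinatewise, i.e.\ in the product topology, and since $H$ is closed in $\hat I_f$ we conclude $\bar x\in H$. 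This compactness step is the only point that requires a little care, but it is entirely routine.

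Finally, for the ``in particular'' clause I would observe that if $J_i$ is a single point then $J_{i-1}=f(J_i)$ is a single point as well, so the set of indices $i$ with $J_i$ degenerate is downward closed in $\N$, i.e.\ an initial segment. If this set were infinite it would be all of $\N$, which would force $\underleftarrow{\lim}(J_i,f_i)=H$ to be a single point; hence when $H$ is non-degenerate the degenerate $J_i$'s form a finite initial segment, so $J_i$ is non-degenerate for all but finitely many $i$. The main (and only genuine) obstacle is the $\supseteq$ inclusion above; everything else is bookkeeping.
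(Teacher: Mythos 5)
Your argument is correct and is precisely the standard proof that the paper omits (the statement is labelled an observation and given without proof): project $H$ to get $J_i=\pi_i(H)$, verify $f(J_{i+1})=J_i$, use closedness of $H$ plus coordinatewise approximation for the inclusion $\supseteq$, and note that degeneracy of $J_{i+1}$ forces degeneracy of $J_i$ for the final clause. Nothing further is needed.
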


The following proposition was proven in \cite{Bi1,Bi2}. We will use it when studying the subcontinua of $\hat I_f$.

For $i> j\geq 1$ let $f_{i,j}:=f_{j}\circ\ldots \circ f_{i-1}$. For completeness let $f_{i,i}:=\mathrm{id}$.

\begin{proposition}\cite[Proposition 3.2]{LM}\label{prop:pseudo-arccharseq}
	Let $f:I\to I$ and let a nondegenerate continuum $H\subset \hat I_f$ be the inverse limit of an inverse sequence $\left(I_{k}, f_{k}\right)_{k=1}^{\infty}$, where $I_{k}\subset I$ for $k=1,2, \ldots$ are closed intervals. Then, $H$ is a pseudo-arc if and only if for every $\delta>0$ and every positive integer $k$ there is an integer $k'>k$ such that $f_{k',k}$ is $\delta$-crooked (where $f_{k',k}$ is a continuous surjection from $I_k'$ to $I_k$).
\end{proposition}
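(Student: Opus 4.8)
The plan is to deduce the characterisation from Bing's classical description of the pseudo-arc among chainable continua, transported into the inverse-limit language through the coordinate projections $\pi_k\colon H\to I_k$. I would first record two reductions. Since $H$ is nondegenerate, Observation~\ref{obs:subcontinua} lets us pass to a cofinal subsequence on which every $I_k$ is a genuine interval, and this affects neither side of the asserted equivalence. And $H=\underleftarrow{\lim}(I_k,f_k)$ is arc-like, so by Bing's theorem \cite{Bi1,Bi2} $H$ is the pseudo-arc if and only if $H$ is hereditarily indecomposable. The driving tool is then Bing's chain-cover criterion: a chainable continuum is hereditarily indecomposable if and only if for every $\varepsilon>0$ it has a finite open chain cover of mesh $<\varepsilon$ that is crooked, in the sense that any chain cover refining it zig-zags through it. Thus everything reduces to a dictionary between $\delta$-crookedness of the bonding compositions $f_{k',k}$ and crookedness of chain covers of $H$.

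That dictionary is the technical heart. For a finite chain $\mathcal C=(C_1,\ldots,C_n)$ of subintervals covering $I_k$, the preimage $\pi_k^{-1}(\mathcal C)=(\pi_k^{-1}(C_1),\ldots,\pi_k^{-1}(C_n))$ is a chain cover of $H$, and since the $\pi_k$ separate points of the compact set $H$, for every $\varepsilon>0$ there are $k$ and such a $\mathcal C$ with $\mesh(\pi_k^{-1}(\mathcal C))<\varepsilon$. Given $k'>k$ and a chain $\mathcal C'=(D_1,\ldots,D_m)$ on $I_{k'}$ fine enough that each $f_{k',k}(D_j)$ is contained in a single link of $\mathcal C$, the chain $\pi_{k'}^{-1}(\mathcal C')$ refines $\pi_k^{-1}(\mathcal C)$ via the pattern $j\mapsto i(j)$ determined by $f_{k',k}(D_j)\subset C_{i(j)}$. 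Unwinding Definition~\ref{def:crooked}, for $\delta$ below a Lebesgue number of $\mathcal C$, the statement that $f_{k',k}$ is $\delta$-crooked between $a\in C_i$ and $b\in C_{i'}$ with $i,i'$ far apart is precisely the statement that as $j$ increases from a link lying over $a$ to one lying over $b$, the index $i(j)$ first comes within one of $i'$ and then within one of $i$ before settling at $i'$ --- i.e. the refinement pattern is crooked. Making this equivalence, and the accompanying bookkeeping between $\delta$, $\mesh(\mathcal C)$ and Lebesgue numbers, precise is the step I expect to be the main obstacle, mainly because $f_{k',k}$ is highly non-monotone, so $f_{k',k}^{-1}(\mathcal C)$ is not itself a chain and one has to work with a fine chain $\mathcal C'$ subordinate to it rather than with $f_{k',k}^{-1}(\mathcal C)$ directly.

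With the dictionary available, both implications are short. For the "if" direction, given $\varepsilon>0$ choose $k$ and $\mathcal C$ on $I_k$ with $\mesh(\pi_k^{-1}(\mathcal C))<\varepsilon$, let $\delta$ be a Lebesgue number of $\mathcal C$, use the hypothesis to get $k'>k$ with $f_{k',k}$ $\delta$-crooked, and take $\mathcal C'$ on $I_{k'}$ fine and subordinate to $f_{k',k}^{-1}(\mathcal C)$; then $\pi_{k'}^{-1}(\mathcal C')$ is a crooked refinement of the $\varepsilon$-fine chain $\pi_k^{-1}(\mathcal C)$, and since $\varepsilon$ was arbitrary $H$ is hereditarily indecomposable, hence the pseudo-arc. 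For the "only if" direction, given $\delta>0$ and $k$ take a chain $\mathcal C$ on $I_k$ of mesh small compared with $\delta$; hereditary indecomposability of $H$ yields a crooked refinement of $\pi_k^{-1}(\mathcal C)$, and since cylinder covers $\pi_{k'}^{-1}(\mathcal C')$ are cofinal among open covers of $H$ the refinement may be taken in that form; reading the dictionary backwards, crookedness of the pattern forces $f_{k',k}$ to be crooked over the links of $\mathcal C$, which a routine mesh-bookkeeping argument converts into $\delta$-crookedness between every pair of points of $I_k$. One could instead bypass Bing's criterion and imitate the proof of Proposition~\ref{prop:charpseudo-arc} from \cite{MK} directly in the non-autonomous setting, but the chain-cover route keeps the combinatorics cleaner.
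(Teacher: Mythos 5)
The paper does not actually prove this proposition: it is imported verbatim from Lewis and Minc \cite[Proposition 3.2]{LM}, resting on Bing's work \cite{Bi1,Bi2}, so there is no in-paper argument to compare yours against. Judged on its own terms, your outline follows the standard route from the literature: reduce to hereditary indecomposability via Bing's theorem that the pseudo-arc is the unique nondegenerate hereditarily indecomposable chainable continuum, and translate $\delta$-crookedness of the compositions $f_{k',k}$ into crookedness of refinement patterns of cylinder chain covers $\pi_k^{-1}(\mathcal C)$. That is the right skeleton, and the preliminary reductions you record (passing to a cofinal subsequence of nondegenerate $I_k$, cofinality of cylinder covers among open covers of $H$) are all legitimate.

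Two points keep the proposal from being a proof. First, the ``dictionary'' between $\delta$-crooked maps and crooked refinement patterns is the entire content of the proposition, and you explicitly leave it as the main obstacle; as written this is an accurate plan rather than an argument. If you do carry it out, it is cleaner to bypass chains and use the internal characterization of hereditary indecomposability: for disjoint closed sets $A\subset U$, $B\subset V$ there exist closed sets with $H=X_1\cup X_2\cup X_3$, $A\subset X_1$, $B\subset X_3$, $X_1\cap X_3=\emptyset$, $X_1\cap X_2\subset V$, $X_2\cap X_3\subset U$. This condition is formally parallel to Definition~\ref{def:crooked} and pulls back through a single projection $\pi_k$ with only mesh bookkeeping and no chain combinatorics; it is essentially the Krasinkiewicz--Minc route behind Proposition~\ref{prop:charpseudo-arc}. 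Second, in the ``if'' direction the criterion you invoke is not quite Bing's: producing, for each $\eps>0$, one $\eps$-fine chain together with one crooked refinement of it does not by itself certify hereditary indecomposability. Bing's construction requires a nested defining sequence of chain covers with mesh tending to $0$ in which each chain is crooked in its predecessor; you can obtain this by applying your hypothesis recursively (choose $k_1<k_2<\cdots$ so that $f_{k_{j+1},k_j}$ is $\delta_j$-crooked for $\delta_j$ a Lebesgue number of the $j$-th chain), but that iteration must be set up explicitly. Neither issue is a wrong turn --- both are repairable --- but both must be filled in; alternatively, since the result is classical, citing \cite{LM} as the paper does is the appropriate resolution.
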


\begin{definition}\label{def:Ii}
			{\upshape
Let $f:I\to I$ be an under diagonal map and let $\eta\in (0,1)$. For every $i\geq 2$ denote by $I_i(\eta)$ a connected component of $f^{-i+1}([\eta,1])$ such that  
 the inverse limit  $\hat{I}(\eta)\subset \hat I_{f}$
$$
I_1(\eta)\xleftarrow{f|_{I_{2}(\eta)}} I_{2}(\eta) \xleftarrow{f|_{I_{3}(\eta)}} I_{3}(\eta)\xleftarrow{f|_{I_{4}(\eta)}}\ldots
$$ is a subcontinuum of $\hat{I}_f$.}
\end{definition}

Since in the rest of the paper it does not matter which inverse limit sequence $\hat I(\eta)$ we choose, by abuse of notation, we speak about general $\hat I(\eta)$ but have in mind that this $\hat I(\eta)$ is just one chosen  inverse limit sequence from possibly several choices.

\begin{observation}\label{obs:1and0}
			{\upshape
	By Observation~\ref{obs:subcontinua}, the space $\hat I(\eta)$ is a non-degenerate continuum.	
	Since $f$ is under diagonal it follows $f^{-1}(1)=\{1\}$. 
	 Thus every interval $I_i(\eta)$ from Definition~\ref{def:Ii} contains point $1$; 
	 therefore, we can compare the intervals $I_i(\eta)$ using the inclusion relation. } 
\end{observation}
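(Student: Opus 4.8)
The plan is to verify the three asserted facts in sequence, since each one reduces to an elementary property of under diagonal maps combined with results already available in the excerpt. First I would argue that $\hat I(\eta)$ is non-degenerate. By Observation~\ref{obs:subcontinua}, $\hat I(\eta)$ is the inverse limit of the sequence $(I_i(\eta), f|_{I_{i+1}(\eta)})$, which is an inverse sequence of closed intervals and bonding maps of the required form; hence it is a subcontinuum of $\hat I_f$. To see it is non-degenerate, note that $I_1(\eta)\supseteq [\eta,1]$ has positive diameter (here using $\eta<1$), and the projection $\pi_1$ maps $\hat I(\eta)$ onto $I_1(\eta)$; therefore $\hat I(\eta)$ cannot be a single point. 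Equivalently one invokes the final sentence of Observation~\ref{obs:subcontinua}: since $I_1(\eta)$ is non-degenerate, $\hat I(\eta)$ is non-degenerate.

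Next I would establish $f^{-1}(1)=\{1\}$. Suppose $x\in I$ with $f(x)=1$. Since $f$ is under diagonal, $f(x)\le x$, so $1\le x$, forcing $x=1$. Since $f$ is surjective, $1$ is indeed attained, so $f^{-1}(1)=\{1\}$ exactly.

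Then I would show each $I_i(\eta)$ contains the point $1$. Proceed by induction on $i$. For $i=1$, we have $I_1(\eta)\supseteq[\eta,1]\ni 1$ by the definition of $I_i(\eta)$ (the component is chosen inside $f^{-1+1}([\eta,1])=[\eta,1]$, or rather $I_1(\eta)$ is a component of $[\eta,1]$ itself, which contains $1$; if the paper's convention sets $I_1(\eta)=[\eta,1]$ this is immediate). For the inductive step, assume $1\in I_{i}(\eta)$. Then $I_{i+1}(\eta)$ is a connected component of $f^{-1}(I_i(\eta))$ that is compatible with the chosen inverse sequence, i.e.\ $f(I_{i+1}(\eta))=I_i(\eta)$, so some point of $I_{i+1}(\eta)$ maps to $1\in I_i(\eta)$; by the previous paragraph that point must be $1$ itself, whence $1\in I_{i+1}(\eta)$. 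The one delicate point is making sure the component of $f^{-i+1}([\eta,1])$ singled out in Definition~\ref{def:Ii} to produce a genuine subcontinuum is the one containing $1$ — but this is automatic, because the inverse-limit compatibility condition $f(I_{i+1}(\eta))=I_i(\eta)$ together with $1\in I_i(\eta)$ and $f^{-1}(1)=\{1\}$ forces $1\in I_{i+1}(\eta)$; no other component of $f^{-1}(I_i(\eta))$ surjects onto $I_i(\eta)$ while avoiding $1$, since it would then miss the fibre over $1$.

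Finally, since every $I_i(\eta)$ is a closed subinterval of $I$ containing the common point $1$, any two of them are nested: intervals sharing an endpoint (here the right endpoint $1$, as $1$ is the maximum of $I$) are linearly ordered by inclusion. Hence the intervals $I_i(\eta)$ can be compared via $\subseteq$, as claimed. The main obstacle is purely bookkeeping — confirming that the "chosen" component in Definition~\ref{def:Ii} is forced to contain $1$ — and it dissolves once $f^{-1}(1)=\{1\}$ is in hand.
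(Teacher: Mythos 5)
Your proposal is correct and follows the same route as the paper's own (very terse) justification: non-degeneracy via Observation~\ref{obs:subcontinua}, the one-line argument that $f(x)=1$ and $f(x)\le x$ force $x=1$, and the induction using surjectivity of the bonding maps to push the point $1$ into every $I_i(\eta)$, after which the intervals $[a_i,1]$ are trivially nested. You have merely spelled out the steps the paper compresses into ``thus,'' including the genuinely relevant point that the component singled out in Definition~\ref{def:Ii} must contain $1$ because no other component of $f^{-i}([\eta,1])$ can surject onto an interval containing $1$.
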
	

For an under diagonal map $f$  denote a fixed point of $\hat f$ by $\mathbf{1}:=(1,1,1,\ldots)\in \hat{I}_f$.
The {\em composant} of a point $x\in \hat I_f$ is the union of all proper subcontinua of $\hat I_f$ that contain point $x$. Denote the composant of $\mathbf{1}$ by $\mathcal{C}_{\mathbf 1}$. For the interpretation of the main results of this paper we note that Observation~\ref{obs:subcontinua} and Observation~\ref{obs:1and0}  yield that the composant $\mathcal{C}_{\mathbf 1}$ is the union of subcontinua $\hat I(\eta)$ for $\eta\in (0,1)$.

The following two lemmas will be crucial ingredients in the proof of the main theorems.

\begin{lemma}\label{lem:strictlyobedient}
Let $f$ be a $0$-attracted interval map. Space $\hat I_{f}$ is homeomorphic to the pseudo-arc if and only if there exists $\eta\in (0,1)$ such that $\hat I(\eta)$ is the pseudo-arc. 
\end{lemma}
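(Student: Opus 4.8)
The plan is to prove both implications using the characterizations available: Proposition~\ref{prop:charpseudo-arc} (crookedness $\Leftrightarrow$ $\hat I_f$ is the pseudo-arc) together with Proposition~\ref{prop:pseudo-arccharseq} (the sequential crookedness criterion for subcontinua). The forward direction is easy: if $\hat I_f$ is the pseudo-arc, then \emph{every} non-degenerate subcontinuum of $\hat I_f$ is homeomorphic to the pseudo-arc, because the pseudo-arc is hereditarily indecomposable and every non-degenerate subcontinuum of a pseudo-arc is again a pseudo-arc. Since Observation~\ref{obs:1and0} tells us $\hat I(\eta)$ is non-degenerate, we are done; alternatively one simply notes that $\hat I(\eta)$ is the inverse limit of an inverse sequence of intervals and applies Proposition~\ref{prop:pseudo-arccharseq} directly, the crookedness of high iterates of $f$ restricting to crookedness of the compositions $f|_{I_{k'}(\eta)}\circ\cdots\circ f|_{I_{k+1}(\eta)}$ onto $I_k(\eta)$.

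For the reverse direction, suppose $\hat I(\eta)$ is the pseudo-arc for some fixed $\eta\in(0,1)$. By Proposition~\ref{prop:charpseudo-arc} it suffices to show $f$ is crooked, i.e. for every $\delta>0$ some iterate $f^n$ is $\delta$-crooked between every pair of points of $I$. The key structural fact is that $f$ is $0$-attracted, so for every $x\in[0,1)$ we have $f^m(x)\to 0$ as $m\to\infty$; in particular, for any pair of target points $a,b\in I$, after applying a large enough power of $f$ all of $I$ except a small neighbourhood of $1$ has been dragged close to $0$. The crucial crookedness, then, has to come from the "top" of the map, near the fixed point $1$ — and precisely there the hypothesis that $\hat I(\eta)$ is a pseudo-arc gives us, via Proposition~\ref{prop:pseudo-arccharseq}, arbitrarily crooked compositions $f_{k',k}$ mapping $I_{k'}(\eta)$ onto $I_k(\eta)$. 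I would take $k=1$, so $I_1(\eta)=[\eta,1]$, and use that for every $\delta>0$ there is $k'$ with $f|_{I_{k'}(\eta)}\circ\cdots\circ f|_{I_2(\eta)}$ being $\delta$-crooked as a map $I_{k'}(\eta)\to[\eta,1]$.

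The technical heart of the argument is to promote this "local" crookedness on the nested intervals $I_k(\eta)$ (which all contain $1$, by Observation~\ref{obs:1and0}) to genuine $\delta$-crookedness of a single iterate $f^n$ between an arbitrary pair $a,b\in I$. The plan is: given $\delta>0$ and $a,b\in I$, first pick a small $\eps>0$; choose $N_1$ so large that $f^{N_1}([0,1-\eps])\subset[0,\delta/2)$ — possible since $0$ is a global attractor of the interior and by uniform continuity/compactness, so that $f^{N_1}$ crushes everything except a neighbourhood of $1$ down near $0$. Then the behaviour of $f^{N_1}$ that matters for crookedness between $a$ and $b$ is governed by its restriction to a neighbourhood $[1-\eps,1]$ of $1$; for $\eps$ small this neighbourhood is contained in some $I_{k}(\eta)$, and by choosing $k'$ via Proposition~\ref{prop:pseudo-arccharseq} and then post-composing with a further power of $f$ (to handle the intervals $I_1(\eta)=[\eta,1]$ being mapped appropriately near $a$ and $b$), one assembles an iterate $f^n$, $n\ge N_1$, that is $\delta$-crooked between $a$ and $b$. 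One must check the two cases according to whether $a,b$ lie in the crushed region near $0$ or in the surviving region near $1$ — the only genuinely nontrivial configuration is when both lie near $1$, and that is exactly what $\hat I(\eta)$ being a pseudo-arc handles. I expect the main obstacle to be the careful bookkeeping of which power of $f$ and which component $I_{k}(\eta)$ to use so that preimages of $a$ and of $b$ under the chosen iterate actually fall inside the interval on which we have crookedness control, and verifying that the "$\delta$-crooked between $a$ and $b$" condition — which quantifies over \emph{all} points $c,d$ with $f^n(c)=a$, $f^n(d)=b$, including those far from $1$ — is met, rather than just getting crookedness witnessed by some points. Using the $0$-attracted hypothesis to argue that points $c,d$ far from $1$ are automatically irrelevant (their images under the outer power are already within $\delta$ of each other, or of the targets) is the mechanism that closes this gap.
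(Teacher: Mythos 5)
Your overall architecture matches the paper's: the forward implication is exactly the hereditary-indecomposability argument (and needs nothing more), and for the converse you, like the paper, reduce to showing $f$ is crooked, extract crooked compositions near the fixed point $1$ from Proposition~\ref{prop:pseudo-arccharseq} applied to $\hat I(\eta)$, use $0$-attraction to control the rest of the interval, and finish by an approximation argument.

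The gap is in how you bridge from crooked compositions onto $I_1(\eta)=[\eta,1]$ to $\delta$-crookedness of a full iterate between \emph{arbitrary} $a,b\in I$. Proposition~\ref{prop:pseudo-arccharseq} only controls target values in $[\eta,1]$, and $\eta$ is handed to you, not chosen; your crushing step $f^{N_1}([0,1-\eps])\subset[0,\delta/2)$ makes targets below $\delta/2$ harmless, but a target $a\in(\delta/2,\eta)$ is neither negligible at scale $\delta$ nor in the range of any composition $f_{k',1}$. You propose to fix this by ``post-composing with a further power of $f$,'' which is the right move but is left unjustified: $\delta$-crookedness is not preserved by post-composition in general, only degraded to $\omega(\delta)$-crookedness where $\omega$ is the modulus of continuity of the fixed power $f^m$ being appended, so the quantifiers must be ordered as: first pick $m$ with $f^m(\eta)<\delta/12$, then pick $\delta'$ with $\omega(\delta')<\delta/3$, then invoke Proposition~\ref{prop:pseudo-arccharseq} at level $\delta'$. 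The paper instead observes that $\hat f^{m}(\hat I(\eta))$ is again a pseudo-arc whose inverse system begins with $f^{m}(I_1(\eta))=[t_m,1]$, where $t_m\to0$ by $0$-attraction; this yields pseudo-arcs $\hat I(\eta_i)$ with $\eta_i\to0$ and lets it take $\eta_N<\delta/12$ outright (with a separate case when $f$ vanishes somewhere on $(0,1)$). Either route works, but one of them must actually be written down. Finally, the genuinely delicate point --- that $\delta$-crookedness quantifies over \emph{all} preimages $c,d$ of $a,b$ under the full iterate, including ones outside the distinguished component on which Proposition~\ref{prop:pseudo-arccharseq} gives control --- is something you flag but do not resolve; this is precisely what the paper's conditions (a)--(b), the smallness of $\eta_N$ together with the Hausdorff closeness of the graph of $f^{m(N)}$ to that of its restriction to $[\eta_N,1]$, are designed to handle, and your plan needs an analogous quantitative statement before it closes.
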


\begin{proof}
$(\Rightarrow)$ The only if direction is obvious due to Observation~\ref{obs:subcontinua} since every proper subcontinuum of the pseudo-arc is the pseudo-arc.\\
$(\Leftarrow)$ For the if direction let us fix $\eta>0$ for which $\hat I(\eta)$ is the pseudo-arc. First let us assume that for every $x\in (0,1)$, $f(x)\neq 0$. Since  $f$ is $0$-attracted

\begin{equation}\label{eq:1}
x \text{ is } f \text{-attracted to } 0
\end{equation}
for every $x\in (0,1)$ and since $\hat I(\eta)$ is the pseudo-arc it follows that for every $m\geq 1$ the inverse limit
$$
f^{m}(I_{1}(\eta))\xleftarrow{f|_{f^{m-1}(I_1(\eta))}} f^{m-1}(I_{1}(\eta))\xleftarrow{f|_{f^{m-2}(I_1(\eta))}}\ldots \xleftarrow{f|_{I_1(\eta)}} I_{1}(\eta)\xleftarrow{f|_{I_2(\eta)}}\ldots 
$$
is the pseudo-arc as well since we can apply homeomorphism $\hat f^m|_{\hat I(\eta)}$ to $\hat I(\eta)$. 
But then using \eqref{eq:1} and the fact that $f$ is $0$-attracted it follows 

\begin{equation}\label{eq:2}
\exists \text{ a sequence } (\eta_i)^{\infty}_{i=1}\subset [0,1) \text{ so that } \hat I(\eta_i) \text{ is the pseudo-arc and } \eta_i\xrightarrow{i\to \infty} 0.
\end{equation}

By $\Gamma(f)$ we denote the graph of an interval map $f$ and by $d_H$ the Hausdorff distance between sets.

Now fix $\delta>0$. 
 By \eqref{eq:2} there exist integers $N,m(N)\geq 1$ such that the following three properties hold:
\begin{enumerate}
\item[(a)] $\eta_N<\frac{\delta}{12}$
\item[(b)] 	$d_H(\Gamma(f^{m(N)}),\Gamma(f^{m(N)}|_{I_1(\eta_N)})))<\frac{\delta}{24}$
\item[(c)] $f^{m(N)}(I_1(\eta_N))$ is $\frac{\delta}{3}$-crooked.
\end{enumerate}

 Items (b) and (c) from the above properties need additional explanation. Interval $I_1(\eta_N)$ is connected and contains point $1$ by the definition. Since $f$ is continuous and $0$-attracted $f^{m(N)}(I_1(\eta_N))=[t,1]$ for some $t\in (0,1)$ and thus the distance between the graphs of maps $f^{m(N)}|_{I_1(\eta_N)}$ and $f^{m(N)}$ can be chosen to be arbitrary small. Note also that in (c) we use Proposition~\ref{prop:pseudo-arccharseq}.

Now let us see that $f^{m(N)}$ is $\delta$-crooked. Let us fix $a,b\in I$ and $c<d\in I$ such that $f^{m(N)}(c)=a$ and $f^{m(N)}(d)=b$. 
The idea is that because of (a) and (b) it follows that $f^{m(N)}$ and $f^{m(N)}|_{I_1(\eta_N)}$ are close enough so that we can use (c) and applying the $\frac{\delta}{3}$-crookedness of $f^{m(N)}|_{I_1(\eta_N)}$  conclude crookedness of $f$. Let us make this more precise. 
There exist $\tilde{a},\tilde{b}\in f^{m(N)}(I_1(\eta_N)) $ and $\tilde c,\tilde d\in I_1(\eta_N)$ such that by (b) 

$$|\tilde{a}-a|<\frac{\delta}{24} \text{ and } |\tilde{b}-b|<\frac{\delta}{24},$$ 
and by (a) 
$$|c-\tilde{c}|<\frac{\delta}{12} \text{ where } c<\tilde c \text{ and } |d-\tilde{d}|<\frac{\delta}{12} \text{ where } \tilde d<d$$ 

are such that 

$$f^{m(N)}|_{I_1(\eta_N)}(\tilde{c})=\tilde b  \text{ and }  f^{m(N)}|_{I_1(\eta_N)}(\tilde{d})=\tilde a.$$ 

By (c) and (b) there exist  $\tilde c<\tilde c'<\tilde d'<\tilde d$ such that  

$$|f^{m(N)}|_{I_1(\eta_N)}(\tilde c')-f^{m(N)}(\tilde c')|<\frac{\delta}{24} \text{ and } |f^{m(N)}|_{I_1(\eta_N)}(\tilde d')-f^{m(N)}(\tilde{d'})|<\frac{\delta}{24}$$

and
 $$|f^{m(N)}|_{I_1(\eta_N)}(\tilde c)-f^{m(N)}|_{I_1(\eta_N)}(\tilde d')|<\frac{\delta}{3} \text{ and } |f^{m(N)}|_{I_1(\eta_N)}(\tilde d)-f^{m(N)}|_{I_1(\eta_N)}(\tilde c')|<\frac{\delta}{3}.$$ 
 
 Since $c<\tilde c$ and  $\tilde d<d$ it follows $c< \tilde{c'}<\tilde{d'}<d$. But then applying the above inequalities 

\begin{equation}
\begin{split}
 |f^{m(N)}(\tilde{c'})-f^{m(N)}(d)|=|f^{m(N)}(\tilde{c'})-f^{m(N)}(d)+f^{m(N)}|_{I_1(\eta_N)}(\tilde d)\\-f^{m(N)}|_{I_1(\eta_N)}(\tilde c')
 -f^{m(N)}|_{I_1(\eta_N)}(\tilde d)+f^{m(N)}|_{I_1(\eta_N)}(\tilde c')|\leq |f^{m(N)}(\tilde{c'})-\\f^{m(N)}|_{I_1(\eta_N)}(\tilde c')|+|a-\tilde{a}|
 +|f^{m(N)}|_{I_1(\eta_N)}(\tilde d)-f^{m(N)}|_{I_1(\eta_N)}(\tilde c')|<\frac{\delta}{6}+\frac{\delta}{12}+\frac{\delta}{3} <\delta 
  \end{split}
  \end{equation}
 
 and similarly 
 
 $$|f^{m(N)}(\tilde{d'})-f^{m(N)}(c)|<\delta.$$
 This shows that $f$ is crooked.\\
Note that the above argument also takes into account points $a,b$ that are possibly in $ [0,\eta_N]$. Indeed, since $f$ is under diagonal also all images of $x\in[0,\eta_N] $ will be smaller than $\eta_N$ and thus this does not have effect on the crookedness of $f$.
 
 Now assume that there exist $z\in (0,1)$ such that $f(z)=0$ and let $z$ be the largest in $(0,1)$ for which this holds. Following the preceding proof we obtain that for any sequence $(\eta_i)$ in (2) there exists $M>0$ such that $\eta_M< z$.
  If $N(m)>M$ it is thus enough to require that $f^{M}(I_1(\eta_M))$ is $\delta$-crooked (compare with (a)-(c) from the proof above) which directly implies that $f$ is crooked as well.
\end{proof}	

\begin{lemma}\label{lem:main}
Let $f$ be an under diagonal map and $\mathrm{Fix}(f)$ nowhere dense in $I$. The space $\hat I_f$ is the pseudo-arc if and only if there exists $\eta\in (0,1)$ so that $\eta$ is $f$-attracted to $0$ and $\hat I(\eta)$ is the pseudo-arc. 
 \end{lemma}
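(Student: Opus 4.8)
The plan is to treat the two implications separately; the reverse implication is the substantive one and is obtained by running the proof of Lemma~\ref{lem:strictlyobedient} essentially verbatim, while the forward implication reduces to exhibiting one suitable parameter.

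\emph{$(\Leftarrow)$.} Fix $\eta\in(0,1)$ which is $f$-attracted to $0$ and has $\hat I(\eta)$ the pseudo-arc. Since $I_1(\eta)$ is a component of $f^{0}([\eta,1])=[\eta,1]$, we have $I_1(\eta)=[\eta,1]$; as $f$ is under diagonal the intervals $I_i(\eta)$ are nested and all contain $1$, so $f^{m}([\eta,1])=[t_m,1]$ is a non-degenerate interval with $(t_m)$ non-increasing and $0\le t_m\le f^{m}(\eta)\to 0$, hence $t_m\to 0$. This is the \emph{only} place where ``$\eta$ is $f$-attracted to $0$'' is used, and it plays exactly the role that $0$-attractedness of $f$ played in Lemma~\ref{lem:strictlyobedient}. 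Applying $\hat f^{m}$ to $\hat I(\eta)$ shows that for every $m$ the inverse limit
$$
[t_m,1]\xleftarrow{f} f^{m-1}([\eta,1])\xleftarrow{f}\cdots\xleftarrow{f}[\eta,1]\xleftarrow{f}I_2(\eta)\xleftarrow{f}\cdots
$$
is again the pseudo-arc, so (abusing notation as agreed after Definition~\ref{def:Ii}) we recover the analogue of \eqref{eq:2}: a sequence $\eta_i:=t_{m_i}\to 0$ with $\hat I(\eta_i)$ the pseudo-arc. From here I would copy the argument of Lemma~\ref{lem:strictlyobedient}: given $\delta>0$, pick $N$ with $\eta_N<\delta/12$; by Proposition~\ref{prop:pseudo-arccharseq} applied to the shifted sequence obtain $m(N)$ for which a suitable composition onto $f^{m(N)}([\eta_N,1])$ is $(\delta/3)$-crooked; note $d_{H}\big(\Gamma(f^{m(N)}),\Gamma(f^{m(N)}|_{[\eta_N,1]})\big)<\delta/24$; and run the same interpolation to conclude that $f^{m(N)}$ is $\delta$-crooked. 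The one extra point to check — automatic for every under diagonal map — is that $f^{n}([0,\eta_N])\subseteq[0,\eta_N]$, so that the part of the domain below $\eta_N$ cannot spoil $\delta$-crookedness; this is precisely the remark closing the proof of Lemma~\ref{lem:strictlyobedient}. Thus $f$ is crooked, and by Proposition~\ref{prop:charpseudo-arc} the space $\hat I_f$ is the pseudo-arc.

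\emph{$(\Rightarrow)$.} Assume $\hat I_f$ is the pseudo-arc. Every non-degenerate subcontinuum of the pseudo-arc is a pseudo-arc, so $\hat I(\eta)$ is a pseudo-arc for every $\eta\in(0,1)$, and it remains to find $\eta\in(0,1)$ which is $f$-attracted to $0$. If $\Fix(f)$ does not accumulate at $0$, then (using $0\in\Fix(f)$ and nowhere density) there is $q\in(0,1)$ with $(0,q)\cap\Fix(f)=\emptyset$; on $(0,q)$ one has $f(x)<x$, so the non-increasing bounded-below orbit of any $x\in(0,q)$ converges to the unique fixed point $0$ in $[0,q)$, and any $\eta\in(0,q)$ works. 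Otherwise $\Fix(f)$ accumulates at $0$; I would argue by contradiction. If no $\eta\in(0,1)$ were $f$-attracted to $0$, then $f^{-1}(0)=\{0\}$ (a point of $(0,1)$ mapped to $0$ would be $f$-attracted to $0$), whence $\pi_1^{-1}(0)=\{\mathbf 0\}$. For each $p\in\Fix(f)\cap(0,1)$ the interval $[0,p]$ is forward invariant, and $H_p:=\underleftarrow{\lim}\big([0,p],f|_{[0,p]}\big)=\{(x_i)\in\hat I_f:\ x_i\le p\text{ for all }i\}$ is a proper, non-degenerate ($\mathbf 0,(p,p,\dots)\in H_p$), $\hat f$-invariant subcontinuum, hence a pseudo-arc; I would then show that for a suitable fixed point $p$ and small $\eta$ the pseudo-arcs $H_p$ and $\hat I(\eta)$ intersect (at $(p,p,\dots)$, using $\eta\le\min_{x\in[p,1]}\lim_n f^{n}(x)$) yet are not nested ($\mathbf 1\in\hat I(\eta)\setminus H_p$, $\mathbf 0\in H_p\setminus\hat I(\eta)$), contradicting hereditary indecomposability of $\hat I_f$. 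Consequently some $\eta\in(0,1)$ is $f$-attracted to $0$, and for it $\hat I(\eta)$ is a pseudo-arc.

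\emph{Main obstacle.} In $(\Leftarrow)$ the real work is bookkeeping: verifying that $0$-attractedness entered the proof of Lemma~\ref{lem:strictlyobedient} \emph{only} at the two points identified above, so that the argument carries over. In $(\Rightarrow)$ the delicate step — and the place where the hypothesis that $\Fix(f)$ is nowhere dense is genuinely used — is ruling out the configuration in which $\Fix(f)$ accumulates at $0$ while no point of $(0,1)$ is $f$-attracted to $0$; the subcontinua $H_p$ paired with the $\hat I(\eta)$, via hereditary indecomposability, are the intended tool.
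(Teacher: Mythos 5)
Your $(\Leftarrow)$ direction is essentially the paper's: the paper disposes of it in one sentence, saying that the proof of Lemma~\ref{lem:strictlyobedient} goes through once the sequence in \eqref{eq:2} is replaced by the orbit of $\eta$, and your bookkeeping (the left endpoints $t_m$ of $f^m([\eta,1])$ tend to $0$ because $t_m\le f^m(\eta)$, and $[0,\eta_N]$ is forward invariant) is the right way to fill that in.

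The $(\Rightarrow)$ direction is where the problem lies. The paper argues by contraposition: assuming no $\eta$ is $f$-attracted to $0$, it takes $\xi$ to be the minimum of the limiting fixed points $p(\eta)=\lim_n f^n(\eta)$, asserts $\xi>0$, and exhibits the decomposition $\hat I_f=\underleftarrow{\lim}([0,\xi],f|_{[0,\xi]})\cup\underleftarrow{\lim}([\xi,1],f|_{[\xi,1]})$, so that $\hat I_f$ is decomposable. Your route via two intersecting, non-nested pseudo-arcs $H_p$ and $\hat I(\eta)$ is a legitimate alternative in principle (it contradicts hereditary indecomposability rather than indecomposability), but the step you defer --- choosing $p\in\Fix(f)\cap(0,1)$ and $\eta>0$ with $\eta\le\min_{x\in[p,1]}\lim_n f^{n}(x)$ so that $(p,p,\dots)\in\hat I(\eta)$ --- is not a technicality: it requires $\inf_{x\in[p,1]}\lim_n f^{n}(x)>0$ for some fixed point $p$, and this is exactly the assertion ``$\xi>0$'' that carries all the content of the paper's argument. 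When $\Fix(f)$ accumulates at $0$, one could a priori have points $x\in[p,1]$ whose (positive) limiting fixed points tend to $0$ --- for instance $f$ having deeper and deeper dips on $[p,1]$ landing in $f$-invariant gaps of $\Fix(f)$ arbitrarily close to $0$ --- in which case the infimum is $0$ for \emph{every} fixed point $p$, your recipe produces no admissible $\eta$, and the ``$\min$'' you write need not even be attained. Your proposal gives no argument excluding this configuration (i.e.\ no proof that it is incompatible with $\hat I_f$ being the pseudo-arc), so the case ``$\Fix(f)$ accumulates at $0$ and no point of $(0,1)$ is attracted to $0$'' is not actually handled; that is the genuine gap.
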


\begin{proof}
$(\Leftarrow)$ This direction follows analogously as in the proof of Lemma~\ref{lem:strictlyobedient}. The only difference from   Lemma~\ref{lem:strictlyobedient} is that we take in \eqref{eq:2} $f^i(\eta)$ for the sequence $(\eta_i)^{\infty}_{i=1}$.\\
$(\Rightarrow)$
Assume that for every $\eta\in (0,1)$
\begin{enumerate}
 \item\label{item:1} $f^{i}(\eta)\not\to 0$ as $i\to \infty$ or 
 \item\label{item:2} $\hat I(\eta)$ is not the pseudo-arc.
 \end{enumerate}
If \eqref{item:2} holds we get by Observation~\ref{obs:1and0} $\hat I(\eta)$ is non-degenerate and thus $\hat I_f$ is not the pseudo-arc.\\ 
Thus we can assume that \eqref{item:1} holds and \eqref{item:2} does not hold. Since $f$ is under diagonal and for every $x\in I\setminus \Fix(f)$, $f^{i}(x)\downarrow y\in \Fix(f)$ (actually every sequence $f^{i}(x)$ is decreasing because map $f$ is under diagonal (even strictly decreasing if $\orb(x,f)\cap \Fix(f)=\emptyset$)) there exists $p(\eta)\in \Fix(f)$ such that $0\neq p(\eta)=\lim_{i\to \infty}f^i(x)$. Indeed, $p(\eta)\in  \Fix(f)$ since otherwise $p(\eta)\neq \lim_{i\to \infty}f^i(x)$. 
Now take
 \begin{equation}
 \begin{split}
 \xi:=\inf_{\eta\in (0,1)}\{p(\eta)\in I; \text{ } \hat{I}(\eta) \text{ is the pseudo-arc} \}\newline=\\=\min_{\eta\in (0,1)}\{p(\eta)\in I; \text{ } \hat{I}(\eta) \text{ is the pseudo-arc} \}.
 \end{split}
 \end{equation}
 The last equality holds since $\Fix(f)$ is closed.
 Since \eqref{item:1} holds we obtain $\xi>0$. Also $f(\xi)=\xi$ since otherwise $\xi$ would not be the minimum as $f$ is under diagonal. Thus, summing up the above and since $f$ is an under diagonal map we obtain 
 $$\hat I_f=\underleftarrow{\lim}([0,\xi],f|_{[0,\xi]})\cup \underleftarrow{\lim}([\xi,1],f|_{[\xi,1]}).$$
 Therefore, $\hat I_f$ is decomposable and thus not the pseudo-arc. 
\end{proof}

\begin{theorem}\label{thm:nwdSetOfFP}
	Let $S$ be a non-degenerate closed nowhere dense set in $I$ such that $0,1\notin S$. There exists a crooked under diagonal interval map $f$ with $\mathrm{Fix}(f)=S\cup\{0,1\}$.
\end{theorem}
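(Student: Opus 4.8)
The plan is to construct $f$ by hand as a three–piece map and then deduce crookedness from Lemma~\ref{lem:main}. Write $a=\min S$ and $b=\max S$; since $S$ is non-degenerate we have $0<a<b<1$, and I fix once and for all some $\eta\in(0,a)$. On $[0,a]$ take the $0$-attracted parabola $f(x)=x^{2}/a$. On $[a,b]$ set $f(x)=x$ for $x\in S$ and, on each component $(c,d)$ of $[a,b]\setminus S$, let $f$ be a downward perturbation $f(x)=x-\lambda_{c,d}(x-c)(d-x)$ of the identity with $\lambda_{c,d}>0$ small enough that $f\ge 0$ and that the depth of the perturbation tends to $0$ as $d-c\to 0$; then $f$ is continuous, under the diagonal on $[a,b]$, with $\Fix(f|_{[a,b]})=S$. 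Finally, on $[b,1]$ take the affinely rescaled Henderson map $T\defeq\phi\circ f_H\circ\phi^{-1}\colon[b,1]\to[b,1]$, where $\phi\colon[0,1]\to[b,1]$ is the increasing affine homeomorphism, modified on one small interval $[u,v]\subset(b,1)$ as follows: on $[u,v]$ replace $T$ by a \emph{deep notch}, i.e. any continuous map with values in $[0,1]$ agreeing with $T$ at $u$ and $v$, strictly below the diagonal on $(u,v)$, and attaining some value $<\eta$. The resulting $f\colon I\to I$ is continuous and under diagonal, and $\Fix(f)=\{0,a\}\cup S\cup\{b,1\}=S\cup\{0,1\}$ (the notch, being below the diagonal, introduces no fixed point); note $\Fix(f)$ is closed and nowhere dense since $S$ is.

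By Lemma~\ref{lem:main} it then suffices to exhibit a point that is $f$-attracted to $0$ and whose associated subcontinuum $\hat I(\cdot)$ is the pseudo-arc. I take $\eta$ itself: it is $f$-attracted to $0$ because $f<\mathrm{id}$ and $f$ has no fixed point on $(0,a)\ni\eta$. The heart of the proof is that $\hat I(\eta)$ is the pseudo-arc. By Observation~\ref{obs:1and0} we may write $I_k(\eta)=[\beta_k,1]$ with $\eta=\beta_1\le\beta_2\le\cdots$ and $f(\beta_{k+1})=\beta_k$, the sequence being nondecreasing because $f$ is under diagonal. The first step is to show $\beta_k\to 1$. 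Put $m(x)\defeq\min_{[x,1]}f$ (continuous, nondecreasing) and $\rho(t)\defeq\inf\{x:m(x)\ge t\}$ (nondecreasing and left-continuous); then $\beta_{k+1}=\rho(\beta_k)$, so $\beta_k$ increases to a limit $\beta_\infty$ that is a fixed point of $\rho$. But $\rho(t)>t$ for every $t\in[\eta,1)$: if $t\notin\Fix(f)$ this follows from $f(t)<t$, and if $t\in\Fix(f)\cap[\eta,1)=S$ then $t\le b$, so the deep notch provides a point $y^{*}\in(u,v)$ with $y^{*}>b\ge t$ and $f(y^{*})<\eta\le t$, hence $m(t)\le f(y^{*})<t$. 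Therefore $\beta_\infty\notin[\eta,1)$, i.e. $\beta_\infty=1$.

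The second step identifies the tail of the defining sequence of $\hat I(\eta)$. Fix $k_0$ with $\beta_{k_0}>v$. For $x\in[b,v]$ we have $f(x)\le x\le v$, and $f([0,b])\subseteq[0,b]$, so $f([0,v])\subseteq[0,v]$; since $\beta_k>v$ for $k\ge k_0$, this gives $f^{-1}([\beta_k,1])\subseteq(v,1]$, where $f\equiv T$. Hence for $k\ge k_0$ both the intervals $I_k(\eta)$ and the bonding maps $f|_{I_{k+1}(\eta)}=T|_{I_{k+1}(\eta)}$ coincide with those of the subcontinuum $H$ of $\underleftarrow{\lim}([b,1],T)$ obtained, as in Definition~\ref{def:Ii}, by pulling $[\beta_{k_0},1]$ back under $T$. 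Since an inverse limit is homeomorphic to the inverse limit of any tail of its defining sequence, $\hat I(\eta)\cong H$. Now $\underleftarrow{\lim}([b,1],T)\cong\hat I_{f_H}$, which is the pseudo-arc because $f_H$ is crooked (Example~\ref{ex:Hen}, Proposition~\ref{prop:charpseudo-arc}); and $H$ is a non-degenerate proper subcontinuum of it, hence itself the pseudo-arc. Thus $\hat I(\eta)$ is the pseudo-arc, Lemma~\ref{lem:main} yields that $\hat I_f$ is the pseudo-arc, and Proposition~\ref{prop:charpseudo-arc} finally gives that $f$ is crooked.

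I expect the main obstacle to be precisely the tension resolved by the deep notch. Prescribing the fixed set $S$ forces $b,1\in\Fix(f)$, so without further modification the rescaled Henderson block makes $[b,1]$ forward invariant; then $\hat I_f=\underleftarrow{\lim}([0,b],f|_{[0,b]})\cup\underleftarrow{\lim}([b,1],T)$ is decomposable and hence never the pseudo-arc. The single escape notch near $1$ cures this by letting orbits leave $[b,1]$, drop below $\eta$, and be $f$-attracted to $0$; it is exactly what makes $\beta_k\to 1$ hold (the case $t\in S$ above) and thereby makes the tail of the defining sequence of $\hat I(\eta)$ literally that of a rescaled Henderson map. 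One then has to be sure that this extra fold, together with the perturbations over $S$, does not spoil crookedness — which is why it is convenient to route everything through $\hat I(\eta)$ and the tail argument rather than estimating the crookedness of $f^{n}$ directly.
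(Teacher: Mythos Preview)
Your argument is correct and, like the paper, routes everything through Lemma~\ref{lem:main}; however, the construction and the verification of its hypotheses are organised quite differently. The paper tacitly rescales so that $S\subset(0,1/4)$, sets $f|_{[1/2,1]}=f_H|_{[1/2,1]}$, inserts a piecewise-linear dip hitting $0$ at $x=3/8$ on $[1/4,1/2]$, and then chooses $\eta=\sqrt{3/8}\in(1/2,1)$ \emph{inside} the Henderson block. With that choice every $I_k(\eta)$ already lies in $[1/2,1]$, so $\hat I(\eta)$ is visibly a subcontinuum of $\hat I_{f_H}$ with no further analysis, while $f^{2}(\eta)=f(3/8)=0$ gives the attraction to $0$ for free. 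You instead leave $S$ where it is, rescale Henderson onto $[b,1]$, cut a single deep notch into it, and place $\eta$ \emph{below} $a=\min S$; the price is the nontrivial step of proving that the left endpoints $\beta_k$ climb to $1$ (your fixed-point argument for $\rho$, with the notch supplying the key inequality $m(t)<t$ for $t\in S$), so that the tail of the defining inverse system lands in the unmodified Henderson block. The paper's route is markedly shorter because its placement of $\eta$ makes the pseudo-arc condition immediate; yours is longer but avoids the silent rescaling of $S$ and makes explicit the role of the escape mechanism (your notch, the paper's drop to $0$ at $3/8$) in preventing the decomposition that the fixed point $b$ would otherwise force.
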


\begin{proof}
	We define $f$ in the following way. First let $f|_{[1/2,1]}=f_H|_{[1/2,1]}$, thus $f(1/2)=1/4$ and let $f(3/8)=0$, $f(1/4)=1/4$ and $f|_{[1/4,1/2]}$ be a piecewise linear map with two pieces of monotonicity on the interval $[3/8,1/2]$. Now, let $S\subset (0,1/4)$ and let $f|_{[0,1/4]}$ be chosen so that $\Fix(f|_{(0,1/4]})=S$ and for $x\in (0,1/4]\setminus S$ let $f(x)<x$. Since $f^2(\frac{\sqrt{3}}{2\sqrt{2}})=f(3/8)=0$ we take $\eta=\frac{\sqrt{3}}{2\sqrt{2}}$ and we apply Lemma~\ref{lem:main}, which finishes the proof.
\end{proof}

Since we have uncountably many non-homeomorphic closed nowhere dense sets in $I$ we obtain the following.

\begin{corollary}\label{cor:non-conjobedient}
There exist uncountably many topologically non-conjugated under diagonal crooked  maps with pairwise different sets of fixed points.
\end{corollary}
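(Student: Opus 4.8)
The plan is to derive Corollary~\ref{cor:non-conjobedient} as an almost immediate consequence of Theorem~\ref{thm:nwdSetOfFP} together with the conjugacy-invariance of the fixed-point set (up to homeomorphism of $I$). First I would recall the standard fact that there are uncountably many pairwise non-homeomorphic compact subsets of $I$; concretely, one can take a family $\{S_\alpha\}_{\alpha\in A}$ of closed nowhere dense subsets of $(0,1)$, indexed by an uncountable set $A$, such that no two of them are homeomorphic. A convenient explicit family: for each infinite set $B\subseteq\N$ one builds a Cantor-like set whose "levels of accumulation" encode $B$, or more simply one takes finite unions of scaled Cantor sets together with isolated points arranged so that the Cantor–Bendixson rank (or the pattern of isolated points relative to the perfect kernel) distinguishes them; there are uncountably many such homeomorphism types. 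I would just cite this as folklore, since the excerpt already asserts "we have uncountably many non-homeomorphic closed nowhere dense sets in $I$."

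Next, for each $\alpha\in A$ apply Theorem~\ref{thm:nwdSetOfFP} to $S_\alpha$ (after, if necessary, an affine rescaling to place $S_\alpha$ inside $(0,1/4)$ with $0,1\notin S_\alpha$, exactly as the proof of that theorem arranges) to obtain a crooked under-diagonal map $f_\alpha\in C(I)$ with $\Fix(f_\alpha)=S_\alpha\cup\{0,1\}$. Crookedness of each $f_\alpha$ is guaranteed by the theorem, so each $\hat I_{f_\alpha}$ is the pseudo-arc by Proposition~\ref{prop:charpseudo-arc}.

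The key step is to check pairwise non-conjugacy. Suppose $f_\alpha$ and $f_\beta$ are conjugate via a homeomorphism $h\colon I\to I$, i.e. $f_\beta=h\circ f_\alpha\circ h^{-1}$. Then $y\in\Fix(f_\alpha)$ iff $h(y)\in\Fix(f_\beta)$, so $h$ restricts to a homeomorphism from $\Fix(f_\alpha)=S_\alpha\cup\{0,1\}$ onto $\Fix(f_\beta)=S_\beta\cup\{0,1\}$. Since $h$ is a homeomorphism of $I$ it is monotone, hence fixes the endpoints setwise as the two extreme points of these sets (both contain $0$ and $1$, which are the min and max); therefore $h$ maps $S_\alpha=\Fix(f_\alpha)\setminus\{0,1\}$ homeomorphically onto $S_\beta$. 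But $S_\alpha$ and $S_\beta$ were chosen non-homeomorphic, a contradiction. Hence $\{f_\alpha\}_{\alpha\in A}$ is an uncountable family of pairwise non-conjugate crooked under-diagonal maps with pairwise distinct (indeed pairwise non-homeomorphic) fixed-point sets, which is exactly the statement.

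The only mild subtlety — and the step most deserving of care — is ensuring the chosen family of sets genuinely has uncountably many homeomorphism types of closed nowhere dense subsets of $(0,1)$; this is classical (for instance, the countable ordinals already give uncountably many homeomorphism types of countable compact metric spaces via the Cantor–Bendixson derivative), so I would simply invoke it. Everything else is bookkeeping: the rescaling to fit the hypotheses of Theorem~\ref{thm:nwdSetOfFP}, and the observation that a monotone self-homeomorphism of $I$ conjugating the two maps must carry one fixed-point set onto the other.
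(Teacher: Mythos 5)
Your proposal is correct and is essentially the paper's own argument: the paper derives the corollary from Theorem~\ref{thm:nwdSetOfFP} with the single remark that there are uncountably many non-homeomorphic closed nowhere dense subsets of $I$, leaving implicit the observation you spell out, namely that a conjugating homeomorphism carries one fixed-point set homeomorphically onto the other. Your additional details (the classical source of uncountably many homeomorphism types and the endpoint bookkeeping) are just an expansion of that same route.
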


We will only implicitly use the well known (standard) definition of topological entropy for compact metric spaces from \cite{AKM,Bowen} and thus we will not define it here.

Note that under diagonal and above diagonal interval maps have topological entropy $0$.

Therefore, Corollary~\ref{cor:non-conjobedient} and Theorem~\ref{thm:non-conjstrobedient} imply the following two statements.

\begin{corollary}
	There exists an uncountable set of crooked interval maps $\mathcal{T}\subset C(I)$ with topological entropy $0$ such that any two different $f, g\in \mathcal{T}$ are not topologically conjugate.
\end{corollary}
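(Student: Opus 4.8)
The statement is essentially a repackaging of what has already been established, so the plan is short. I would take $\mathcal{T}$ to be precisely the uncountable family of under diagonal crooked interval maps produced by Corollary~\ref{cor:non-conjobedient} — or, equally well, the family $\mathcal{T}$ of crooked maps with exactly two fixed points from Theorem~\ref{thm:non-conjstrobedient}, which are $0$-attracted and hence under diagonal. By construction each $f\in\mathcal{T}$ is crooked, so $\hat I_f$ is the pseudo-arc by Proposition~\ref{prop:charpseudo-arc}, and by Corollary~\ref{cor:non-conjobedient} (resp. Theorem~\ref{thm:non-conjstrobedient}) any two distinct members of $\mathcal{T}$ are topologically non-conjugate. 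The only remaining point is the entropy bound.

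For that there is nothing to prove beyond the remark already recorded in the text immediately above the statement, namely that under diagonal (and above diagonal) interval maps have topological entropy $0$. For completeness one notes that if $f(x)\le x$ for all $x\in I$ and $f^k(p)=p$, then $p=f^k(p)\le f^{k-1}(p)\le\cdots\le f(p)\le p$, forcing $f(p)=p$; thus $f$ has no periodic point of period larger than one, and an interval map whose periodic points are all fixed has topological entropy $0$ by the classical characterisation of zero-entropy interval maps (see \cite{AKM}). Combining this with the previous paragraph yields the corollary.

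I do not expect a genuine obstacle here, since all the real work sits in Corollary~\ref{cor:non-conjobedient} and Theorem~\ref{thm:non-conjstrobedient} and the entropy statement is elementary. The one point deserving a word of care is that the phrase \emph{pairwise different sets of fixed points} must be read — as it is in fact arranged in the proof of Theorem~\ref{thm:nwdSetOfFP} — as \emph{pairwise non-homeomorphic sets of fixed points}, the defining nowhere dense sets $S$ being chosen from an uncountable family of pairwise non-homeomorphic closed nowhere dense subsets of $(0,1)$. Since a conjugacy $h$ with $g=h\circ f\circ h^{-1}$ restricts to a homeomorphism of $I$ carrying $\Fix(f)$ onto $\Fix(g)$, non-homeomorphic fixed-point sets indeed preclude conjugacy, and this is exactly what makes $\mathcal{T}$ simultaneously uncountable, pairwise non-conjugate, crooked, and of zero topological entropy.
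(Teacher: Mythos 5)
Your proposal is correct and follows exactly the paper's route: the paper derives this corollary from Corollary~\ref{cor:non-conjobedient} and Theorem~\ref{thm:non-conjstrobedient} together with the remark that under diagonal maps have zero topological entropy, which is precisely your argument (your elementary justification that all periodic points of an under diagonal map are fixed is a welcome expansion of that remark, though the zero-entropy conclusion from this rests on Misiurewicz's characterisation rather than on \cite{AKM}, which only defines entropy). Your caveat that ``pairwise different sets of fixed points'' must be read as ``pairwise non-homeomorphic'' matches the paper's own phrasing preceding Corollary~\ref{cor:non-conjobedient}.
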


A question that remains to be answered is whether there are uncountably many pairwise non-conjugated maps that have the same structure of fixed points as the Henderson's map $f_H$, i.e. $0$-attracted maps.

\begin{definition} \label{def:strictlymonotone}
	{\upshape
	Let $f\in C(I)$. We say that $f$ is {\em countably piecewise strictly monotone}, if there are infinite families
$\mathcal{I}_{f}=\left\{\left[p_{k}, q_{k}\right] \mid k\in \N \right\}$
and $\mathcal{D}_{f}=\left\{\left[r_{k}, s_{k}\right] \mid k\in \N \right\}$
of closed intervals in $I$ such that

\begin{enumerate}
	\item for all positive integers $k$ and $\ell$,
\end{enumerate}
$$
k \neq \ell \Longrightarrow\left[p_{k}, q_{k}\right] \cap\left[p_{\ell}, q_{\ell}\right]=\emptyset \text { and }\left[r_{k}, s_{k}\right] \cap\left[r_{\ell}, s_{\ell}\right]=\emptyset
$$
\begin{enumerate}
	\setcounter{enumi}{1}
	\item for all positive integers $k$ and $\ell$,
\end{enumerate}
$$
\left(p_{k}, q_{k}\right) \cap\left(r_{\ell}, s_{\ell}\right)=\emptyset
$$
\begin{enumerate}
	\setcounter{enumi}{2}
	\item the closure $\mathrm{Cl}\left(\left(\bigcup_{k=1}^{\infty}\left[p_{k}, q_{k}\right]\right) \cup\left(\bigcup_{k=1}^{\infty}\left[r_{k}, s_{k}\right]\right)\right)=I$,
	
	\item for each positive integer $k$, the map $f$ is strictly increasing on $\left[p_{k}, q_{k}\right]$, and
	
	\item for each positive integer $k$, the map $f$ is strictly decreasing on $\left[r_{k}, s_{k}\right]$.
	
\end{enumerate}}
\end{definition}

Note that the families $\mathcal{I}_{f}$ and $\mathcal{D}_{f}$ from Definition~\ref{def:strictlymonotone} are uniquely determined for any countably piecewise strictly monotone function $f:I \to I$.

\begin{definition} 
			{\upshape
	Let $f\in C(I)$ be a countably piecewise strictly monotone map and let
	$\mathcal{D}_{f}=\left\{\left[r_{k}, s_{k}\right] \mid k\in \N\right\},$ i.e., for all positive integers $k$ and $\ell$, if $
	k \neq \ell$ then $\left[r_{k}, s_{k}\right] \cap\left[r_{\ell}, s_{\ell}\right]=\emptyset$.
	We define $\mathcal{A}(f)$ to be the set of all points $t \in[0,1]$ for which there is a convergent subsequence $\left(\left[r_{i_{k}}, s_{i_{k}}\right]\right)$ of the sequence $\left(\left[r_{k}, s_{k}\right]\right)$ in the Hausdorff metric, such that
	$$
	\lim _{k \rightarrow \infty}\left[r_{i_{k}}, s_{i_{k}}\right]=\{t\}.
	$$ We call points in $\mathcal{A}(f)$  the {\em accumulation points of $f$}.}
\end{definition}

\begin{observation}\label{obs:accpts}
			{\upshape
	Let $f,g\in C(I)$ be two countably piecewise strictly monotone functions. If $f$ and $g$ are topologically conjugate and $\varphi$ : $I \rightarrow I$ is a homeomorphism such that $f=\varphi^{-1} \circ g \circ \varphi$, then

$$
\mathcal{A}(g)=\{\varphi(t) \mid t \in \mathcal{A}(f)\}.
$$ In particular, $|\mathcal{A}(g)|=|\mathcal{A}(f)|$.}
\end{observation}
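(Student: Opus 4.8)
The plan is to transport the monotone decomposition of $f$ through the conjugating homeomorphism $\varphi$ and then exploit that a homeomorphism of the compact space $I$ induces a homeomorphism of the hyperspace of its nonempty compact subsets equipped with the Hausdorff metric. Write $g=\varphi\circ f\circ\varphi^{-1}$ (from $f=\varphi^{-1}\circ g\circ\varphi$).

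\textbf{Step 1: identifying $\mathcal{I}_g$ and $\mathcal{D}_g$.} A homeomorphism $\varphi\colon I\to I$ is either strictly increasing or strictly decreasing. For each $k$, the restriction $g|_{\varphi([p_k,q_k])}$ equals $\varphi\circ\big(f|_{[p_k,q_k]}\big)\circ\big(\varphi^{-1}|_{\varphi([p_k,q_k])}\big)$, a composition of three strictly monotone maps in which $f|_{[p_k,q_k]}$ is increasing; when $\varphi$ is increasing the other two factors are increasing, and when $\varphi$ is decreasing both of them are decreasing, so in either case the composition is strictly increasing. The same bookkeeping shows $g|_{\varphi([r_k,s_k])}$ is strictly decreasing. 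Since $\varphi$ is a homeomorphism of $I$ it sends closed intervals to closed intervals and open intervals to open intervals, preserves disjointness of sets, commutes with closure, and satisfies $\varphi(I)=I$; hence the families $\{\varphi([p_k,q_k]):k\in\N\}$ and $\{\varphi([r_k,s_k]):k\in\N\}$ verify conditions (1)--(5) of Definition~\ref{def:strictlymonotone} for $g$. By the uniqueness of these families recorded after Definition~\ref{def:strictlymonotone}, we conclude $\mathcal{I}_g=\{\varphi([p_k,q_k]):k\in\N\}$ and $\mathcal{D}_g=\{\varphi([r_k,s_k]):k\in\N\}$; in particular $\mathcal{D}_g=\{\varphi(J):J\in\mathcal{D}_f\}$.

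\textbf{Step 2: accumulation points.} Both $\varphi$ and $\varphi^{-1}$ are continuous on the compact space $I$, hence uniformly continuous, so the induced maps on the hyperspace of nonempty compact subsets of $I$ (with the Hausdorff metric) are continuous and mutually inverse. In particular, for any sequence of subintervals $(J_k)$ of $I$ and any $t\in I$, one has $J_k\to\{t\}$ in the Hausdorff metric if and only if $\varphi(J_k)\to\{\varphi(t)\}$, using $\varphi(\{t\})=\{\varphi(t)\}$. Since the members of $\mathcal{D}_f$ are pairwise disjoint, hence distinct, the set $\mathcal{A}(f)$ depends only on the family $\mathcal{D}_f$ (a point $t$ lies in it precisely when some sequence of distinct members of $\mathcal{D}_f$ Hausdorff-converges to $\{t\}$), and likewise for $\mathcal{A}(g)$. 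Combining this with Step 1: a sequence of distinct members of $\mathcal{D}_f$ converges to $\{t\}$ iff the corresponding sequence of members of $\mathcal{D}_g$ converges to $\{\varphi(t)\}$. Therefore $t\in\mathcal{A}(f)\iff\varphi(t)\in\mathcal{A}(g)$, i.e. $\mathcal{A}(g)=\{\varphi(t):t\in\mathcal{A}(f)\}$; and since $\varphi$ is a bijection, $|\mathcal{A}(g)|=|\mathcal{A}(f)|$.

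\textbf{Main obstacle.} The argument is essentially a transport-of-structure computation, so I do not expect a genuine difficulty. The two points that call for a little care are verifying that the $\varphi$-images of the two families really satisfy all of conditions (1)--(5) — in particular the closure condition (3) and that $\varphi$ carries $(p_k,q_k)$ onto the open interval with endpoints $\varphi(p_k),\varphi(q_k)$ — so that the uniqueness remark after Definition~\ref{def:strictlymonotone} may be invoked, and recording the standard fact that a homeomorphism of a compact metric space induces a homeomorphism of its hyperspace with respect to the Hausdorff metric.
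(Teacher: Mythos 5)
Your proof is correct. The paper states this result as an Observation without supplying any proof, and your transport-of-structure argument --- that conjugacy carries the (uniquely determined) families $\mathcal{I}_f,\mathcal{D}_f$ onto $\mathcal{I}_g,\mathcal{D}_g$ regardless of the orientation of $\varphi$, and that a homeomorphism of $I$ induces a homeomorphism of the hyperspace in the Hausdorff metric, so Hausdorff limits of the decreasing intervals are carried to Hausdorff limits --- is precisely the standard reasoning the paper leaves implicit.
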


\begin{theorem}\label{thm:non-conjstrobedient}
	There exists an uncountable set of crooked $0$-attracted interval maps $\mathcal{T}\subset C(I)$ such that any two different $f, g\in \mathcal{T}$ are not topologically conjugate.
\end{theorem}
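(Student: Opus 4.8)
The plan is to produce the uncountable family $\mathcal{T}$ by a one-parameter deformation of the Henderson map $f_H$ near the fixed point $0$, keeping all the crookedness (which lives near $1$) untouched, and then to distinguish the members of the family by a conjugacy invariant. Concretely, fix the right half of $f_H$ once and for all: let $f|_{[1/2,1]}=f_H|_{[1/2,1]}$ exactly as in the proof of Theorem~\ref{thm:nwdSetOfFP}, so that some preimage structure guarantees a point $\eta\in(1/2,1)$ with $f^2(\eta)=0$, and $\hat I(\eta)$ is the pseudo-arc because the right half of $f_H$ is crooked (this is exactly the ingredient supplied by Henderson's construction, used via Proposition~\ref{prop:pseudo-arccharseq}). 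Then, for each parameter $\alpha$ in an uncountable index set, I would define $f_\alpha$ on $[0,1/2]$ to be $0$-attracted (so $f_\alpha(0)=0$, $f_\alpha(x)<x$ on $(0,1/2]$, and it glues continuously to the fixed right half), but with a prescribed ``amount of piecewise-strict-monotone oscillation'' encoded by $\alpha$. By Lemma~\ref{lem:strictlyobedient}, since $\hat I(\eta)$ is the pseudo-arc and $f_\alpha$ is $0$-attracted, every $\hat I_{f_\alpha}$ is the pseudo-arc; hence every $f_\alpha$ is crooked and $0$-attracted, so $\mathcal{T}:=\{f_\alpha\}$ is a legitimate candidate.

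The second and essential part is to arrange that the $f_\alpha$ are pairwise non-conjugate. Here I would use the invariant set up in the run-up to the theorem: make each $f_\alpha$ countably piecewise strictly monotone, and control the set of accumulation points $\mathcal{A}(f_\alpha)$. By Observation~\ref{obs:accpts}, if $f_\alpha$ and $f_\beta$ are conjugate via $\varphi$ then $\mathcal{A}(f_\beta)=\varphi(\mathcal{A}(f_\alpha))$, so $|\mathcal{A}(f_\alpha)|=|\mathcal{A}(f_\beta)|$ and, more refined, the order-type / topological type of $\mathcal{A}(f_\alpha)$ as a subset of $I$ is a conjugacy invariant. Since the right half $f_H|_{[1/2,1]}$ is fixed, its contribution to $\mathcal{A}$ is a fixed closed set (accumulating at $1$); the freedom is on $[0,1/2]$, where I would insert, for each $\alpha$, a family of decreasing laps whose Hausdorff limits form a closed set $K_\alpha\subset(0,1/2)$ of a prescribed homeomorphism type. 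Choosing $\{K_\alpha\}$ to be an uncountable family of pairwise non-homeomorphic closed nowhere dense subsets of $(0,1/2)$ (these exist, as already invoked for Corollary~\ref{cor:non-conjobedient}), and arranging $\mathcal{A}(f_\alpha)=K_\alpha\cup(\text{fixed piece from the right half})$, forces $f_\alpha\not\cong f_\beta$ whenever $K_\alpha\not\cong K_\beta$, because a conjugating homeomorphism of $I$ would restrict to a homeomorphism $K_\alpha\to K_\beta$ (after separating off the fixed right-hand piece, e.g.\ by noting it is the unique part of $\mathcal{A}$ accumulating at the endpoint $1$, which $\varphi$ must fix since $1\in\Fix$). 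That yields uncountably many pairwise non-conjugate maps.

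The main obstacle, and the step that needs genuine care, is the \emph{simultaneous} realization: building, on $[0,1/2]$, a continuous $0$-attracted map that is countably piecewise strictly monotone, glues $C^0$ to $f_H$ at $1/2$, keeps $f_\alpha(x)<x$ on $(0,1/2]$ (so that the $0$-attracted hypothesis of Lemma~\ref{lem:strictlyobedient} genuinely holds and no new fixed points sneak in), and has its decreasing-lap accumulation set equal to the prescribed $K_\alpha$. One has to make the oscillations of the notches shrink fast enough in amplitude as they approach each point of $K_\alpha$ that the graph stays strictly below the diagonal and the map stays continuous, while still shrinking in width so that the intervals $[r_k,s_k]$ Hausdorff-converge exactly to the points of $K_\alpha$ and to nothing else; a Cantor-like middle-thirds bookkeeping of the notch positions and a geometric control of amplitudes versus distance-to-$K_\alpha$ should do it, but this is where the explicit estimates live. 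A secondary, milder point is checking that $f_\alpha$ is honestly surjective onto $[0,1/2]$ and $C^0$ at $1/2$ for \emph{every} $\alpha$; this is routine once the notch amplitudes are taken small. Finally, one records that $\mathcal{T}$ consists of crooked $0$-attracted maps (hence, as noted in the paper, of topological entropy $0$), completing the proof.
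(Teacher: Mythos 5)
Your proposal follows essentially the same route as the paper: fix $f|_{[1/2,1]}=f_H|_{[1/2,1]}$, vary the $0$-attracted left half, invoke Lemma~\ref{lem:strictlyobedient} for crookedness, and separate conjugacy classes via the accumulation points of decreasing laps through Observation~\ref{obs:accpts}. Your realization of the uncountable family by prescribing pairwise non-homeomorphic closed nowhere dense accumulation sets $K_\alpha$ is just a more explicit rendering of the paper's appeal to varying the Cantor--Bendixson rank of the set of strict critical points on $[0,1/2]$.
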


\begin{proof}
	We will show that there are uncountably many non-conjugate maps such that $f|_{[0,1/2]}$ with $f(0)=0$, $f(1/2)=1/4$ and $f(x)<x$ for $x\in (0,1/2)$ and $f|_{[1/2,1]}=f_H|_{[1/2,1]}$. By Lemma~\ref{lem:strictlyobedient} such $f$ is crooked so we have complete freedom to choose $f|_{[0,1/2]}$ as long as $f(x)<x$ for all $x\in (0,1/2]$. At this point we could finish the proof by saying that we have uncountably many options to choose the Cantor-Bendixson rank of the set of strict critical points of the map $f|_{[0,1/2]}$.\\
	Let us sketch the arguments. Let $f|_{[0,1/2]}$ have $n$ critical points. We have a choice to make each strict critical point an accumulation point or not and since conjugacy of a $0$-attracted map needs to preserve $0$ it also needs to preserve ordering of accumulation points. Accumulation points are preserved under the conjugacy by Observation~\ref{obs:accpts}. In such a way we get countably (varying the number of strict critical points) over countably (by adding accumulation points) many pairwise non-conjugate $0$-attracted choices to construct map $f$. 
\end{proof}

\begin{corollary}
	There exists uncountably many pairwise non-conjugated crooked maps with topological entropy $0$ and exactly two fixed points. 
\end{corollary}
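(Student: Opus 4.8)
The plan is to derive the Corollary directly from Theorem~\ref{thm:non-conjstrobedient} together with the observation that every $0$-attracted map has exactly two fixed points. Recall that by Definition~\ref{def:obedient} a $0$-attracted map $f$ satisfies $f(0)=0$, $f(1)=1$ and $f(x)<x$ for all $x\in(0,1)$; in particular $f(x)\neq x$ for every $x\in(0,1)$, so $\Fix(f)=\{0,1\}$, which has cardinality two. Thus the set $\mathcal{T}$ furnished by Theorem~\ref{thm:non-conjstrobedient} already consists of crooked interval maps, each having exactly two fixed points, and any two distinct members are non-conjugate.

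The remaining point to record is that each $f\in\mathcal{T}$ has topological entropy $0$. This is immediate from the remark preceding Corollary~\ref{cor:non-conjobedient}: under diagonal (and above diagonal) interval maps have zero topological entropy, and a $0$-attracted map is in particular under diagonal since $f(x)<x$ for $x\in(0,1)$ and $f(0)=0$, $f(1)=1$ give $f(x)\le x$ for all $x\in I$, i.e. $f\in C_{ud}(I)$. Hence $\htop(f)=0$ for every $f\in\mathcal{T}$.

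Putting these two observations together: the uncountable set $\mathcal{T}\subset C(I)$ of Theorem~\ref{thm:non-conjstrobedient} is an uncountable family of crooked interval maps, each with topological entropy $0$ and with $\Fix(f)=\{0,1\}$ (exactly two fixed points), such that any two distinct elements are not topologically conjugate. This is precisely the assertion of the Corollary, so nothing further is needed. There is no substantive obstacle here — the only care required is to spell out that $0$-attracted forces $|\Fix(f)|=2$ and that $0$-attracted maps are under diagonal, hence of zero entropy; both are one-line consequences of the definitions already established in Section~\ref{sec:obedient}.
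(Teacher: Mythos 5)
Your proposal is correct and follows exactly the route the paper intends: the corollary is an immediate consequence of Theorem~\ref{thm:non-conjstrobedient} once one notes that a $0$-attracted map has $\Fix(f)=\{0,1\}$ by definition and is under diagonal, hence of zero topological entropy by the remark preceding Corollary~\ref{cor:non-conjobedient}. The paper leaves these one-line verifications implicit; you have simply spelled them out.
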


We could perform a construction similar to the one in Theorem~\ref{thm:non-conjstrobedient} for any under diagonal map with a nowhere dense set of fixed points and obtain the following theorem. 

\begin{theorem}
Let $S$ be a non-degenerate closed nowhere dense set in $I$ such that $0,1\notin S$. There exist uncoutably many pairwise non-conjugate under diagonal crooked maps $f$ with $\mathrm{Fix}(f)=S\cup\{0,1\}$.
\end{theorem}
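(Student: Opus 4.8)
The plan is to combine the construction strategy of Theorem~\ref{thm:nwdSetOfFP} with the non-conjugacy mechanism of Theorem~\ref{thm:non-conjstrobedient}. First I would reuse the building blocks from the proof of Theorem~\ref{thm:nwdSetOfFP}: set $f|_{[1/2,1]}=f_H|_{[1/2,1]}$ so that $f(1/2)=1/4$, set $f(3/8)=0$, $f(1/4)=1/4$, and let $f|_{[3/8,1/2]}$ be piecewise linear with two laps; this guarantees crookedness once we check that some point is $f$-attracted to $0$. Since $f^2(\tfrac{\sqrt 3}{2\sqrt 2})=f(3/8)=0$, the point $\eta=\tfrac{\sqrt 3}{2\sqrt 2}$ is $f$-attracted to $0$, so by Lemma~\ref{lem:main} the inverse limit $\hat I_f$ is the pseudo-arc, i.e. $f$ is crooked, regardless of how $f|_{[0,1/4]}$ is chosen — provided only that $\Fix(f|_{(0,1/4]})=S$ (after rescaling $S$ into $(0,1/4)$) and $f(x)<x$ for $x\in(0,1/4]\setminus S$.

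Next I would exploit the remaining freedom in $f|_{[0,1/4]}$ to create an uncountable family $\mathcal T_S$ of such maps that are pairwise non-conjugate. As in Theorem~\ref{thm:non-conjstrobedient}, I would make each $f$ countably piecewise strictly monotone on $[0,1/4]$ and use the accumulation set $\mathcal A(f)$ as a conjugacy invariant via Observation~\ref{obs:accpts}. Concretely: on each complementary interval of $S$ in $[0,1/4]$ (there are infinitely many, since $S$ is nowhere dense and non-degenerate, hence infinite) I have the choice of whether to insert a sequence of $v$-shaped notches accumulating on a chosen endpoint or to leave that interval with only finitely many laps. Encoding such choices along the complementary intervals — say, choosing the Cantor–Bendixson rank of the set of accumulation points of $f$, or more simply which countable subset of a fixed countable set of "slots" carries accumulation points — yields uncountably many options. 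Since any conjugacy $\varphi$ with $f=\varphi^{-1}\circ g\circ\varphi$ must fix $0$ and $1$, preserve the set $S\cup\{0,1\}=\Fix(f)$, and preserve $\mathcal A(f)$ with its cyclic/linear order on $I$, two maps built from genuinely different encodings cannot be conjugate; in particular $|\mathcal A(f)|$ or the order type of $\mathcal A(f)$ relative to the (fixed) set $S$ distinguishes uncountably many of them.

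The main obstacle I anticipate is bookkeeping rather than a conceptual difficulty: one must verify that the notches inserted into the complementary intervals of $S$ can be placed so that (i) $f$ remains under diagonal with exactly the prescribed fixed point set $S\cup\{0,1\}$ — so the notches must stay strictly below the diagonal and must not create new fixed points or destroy the existing ones, which is easy since $S$ is closed and nowhere dense so its complementary intervals have positive length and we can shrink the notch amplitudes toward each endpoint of $S$; (ii) $f$ is genuinely countably piecewise strictly monotone in the sense of Definition~\ref{def:strictlymonotone}, i.e. the increasing and decreasing laps have pairwise disjoint interiors and their union is dense, which again follows from placing notches densely in the chosen complementary intervals; and (iii) the resulting $\mathcal A(f)$'s are pairwise distinct under order-preserving homeomorphisms fixing $S\cup\{0,1\}$. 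Once these are arranged, Lemma~\ref{lem:main} supplies crookedness for free, under diagonal maps have topological entropy $0$ by the remark preceding Corollary~\ref{cor:non-conjobedient}, and Observation~\ref{obs:accpts} supplies the non-conjugacy, completing the proof.
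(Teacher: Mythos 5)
Your proposal follows essentially the same route the paper indicates: it grafts the fixed-point construction of Theorem~\ref{thm:nwdSetOfFP} (Henderson tail near $1$, the point $\eta=\tfrac{\sqrt3}{2\sqrt2}$ attracted to $0$, Lemma~\ref{lem:main} for crookedness) onto the non-conjugacy mechanism of Theorem~\ref{thm:non-conjstrobedient} (accumulation points of laps and Observation~\ref{obs:accpts} as conjugacy invariants), which is exactly what the paper means by ``a construction similar to the one in Theorem~\ref{thm:non-conjstrobedient} for any under diagonal map with a nowhere dense set of fixed points.'' In fact your write-up is more detailed than the paper's one-sentence justification, and it correctly identifies the points needing care (notches staying under the diagonal without creating new fixed points, and distinguishing the configurations up to homeomorphisms preserving $S$ setwise).
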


\begin{definition}
			{\upshape
	Let $f:I\to I$ be an above diagonal map and let $\eta\in (0,1)$.
	Define by $I'_i(\eta)$ a connected component of $f^{-i}([0,\eta])$. Denote a fixed point of $\hat f$ by $\mathbf{0}:=(0,0,0,\ldots)\in \hat{I}_f$ and the composant of $\mathbf{0}$ by $\mathcal{C}_{\mathbf 0}$. }
\end{definition}

\begin{remark}\label{rem:reverseobedient}
    {\upshape
	Let $f$ be an above diagonal interval map. Note that $\varphi:I\to I$ defined by $\varphi(x)=1-x$ for every $x\in I$ conjugates $f$ to a map $g\in C_{ud}(I)$, i.e., $\varphi^{-1}\circ f \circ \varphi\in C_{ud}(I)$. Thus, $\varphi$ is actually a homeomorphism of $C_{ud}(I)$ to $C_{ad}(I)$. Therefore, the results from the previous section follow analogously for the set $C_{ad}(I)$ if we replace $\hat I(\eta)$ with similarly defined $\hat I'(\eta)$ and the role of ${\bf 1}$ and $\mathcal{C}_{\bf 1}$ by ${\bf 0}$ and $\mathcal{C}_{\bf 0}$ respectively.}
\end{remark}

Thus we have the following theorem.

\begin{theorem}\label{thm:charobedientandreverseobedient}
	Map $f$ is crooked and under diagonal or above diagonal interval map if and only if $\mathrm{Fix}(f)$ is nowhere dense in $I$ and 
	\begin{enumerate}
		\item either there exists $\eta\in (0,1)$ such that $\hat I(\eta)$ is the pseudo-arc and $\eta$ is $f$-attracted to $0$ or 
		\item there exists $\eta\in (0,1)$ such that $\hat I'(\eta)$ is the pseudo-arc and $\eta$ is $f$-attracted to $1$.
	\end{enumerate}
\end{theorem}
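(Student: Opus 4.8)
The statement is essentially a repackaging of Lemma~\ref{lem:main} together with the reduction in Remark~\ref{rem:reverseobedient}, so the plan is to deduce it from what is already available rather than to argue from scratch. First I would establish the "only if" direction. Suppose $f$ is crooked and under diagonal. Since $f$ is under diagonal, every orbit $f^i(x)$ is non-increasing and hence converges to some point of $\Fix(f)$; in particular $\Fix(f)\ne\emptyset$. I claim $\Fix(f)$ is nowhere dense: if some interval $[a,b]$ with $a<b$ consisted entirely of fixed points, then $\hat I_f$ would contain $\underleftarrow{\lim}([a,b],\mathrm{id})$, which is an arc, contradicting that $\hat I_f$ is the pseudo-arc (which contains no arcs), by Proposition~\ref{prop:charpseudo-arc}. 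With $\Fix(f)$ nowhere dense, Lemma~\ref{lem:main} applies verbatim and gives exactly alternative (1): there is $\eta\in(0,1)$ with $\eta$ being $f$-attracted to $0$ and $\hat I(\eta)$ the pseudo-arc. If instead $f$ is crooked and above diagonal, I would apply the conjugacy $\varphi(x)=1-x$ from Remark~\ref{rem:reverseobedient}: $g:=\varphi^{-1}\circ f\circ\varphi$ is under diagonal and crooked (crookedness is a conjugacy invariant), so by the case just handled $g$ satisfies (1); transporting back through $\varphi$, and noting $\varphi$ carries $\hat I(\eta)$ for $g$ to the set $\hat I'(\eta)$ defined for $f$ and swaps the roles of $0$ and $1$, yields alternative (2) for $f$.

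For the "if" direction, suppose $\Fix(f)$ is nowhere dense and (1) holds for some $\eta$. Then $f$ is under diagonal or above diagonal by hypothesis; in the under diagonal case the $(\Leftarrow)$ implication of Lemma~\ref{lem:main} directly gives that $\hat I_f$ is the pseudo-arc, hence $f$ is crooked by Proposition~\ref{prop:charpseudo-arc}, and we are done. If $f$ is above diagonal and (1) holds, or if (2) holds, I would again pass through $\varphi$: transport $f$ to the under diagonal map $g$, observe that the hypothesis (1) or (2) for $f$ becomes a hypothesis of the form "there is $\eta$ with $\hat I(\eta)$ the pseudo-arc and $\eta$ $f$-attracted to $0$" for $g$, apply Lemma~\ref{lem:main} to $g$, and conclude $\hat I_g$ — hence $\hat I_f$ — is the pseudo-arc.

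The only genuinely delicate points are bookkeeping rather than substance. One is making sure that in the "if" direction the two alternatives are matched to the right orientation: if $f$ is under diagonal then (1) is the relevant alternative and (2) cannot occur nontrivially (an under diagonal map has no point with $f^i(\eta)\to 1$ unless $\eta=1$), and symmetrically for above diagonal, so the disjunction in the statement is really "the one appropriate to the side $f$ lies on." The second is checking that the conjugacy $\varphi$ intertwines the constructions $\hat I(\eta)\leftrightarrow\hat I'(\eta)$ and the attracting behaviour $0\leftrightarrow 1$ exactly as claimed in Remark~\ref{rem:reverseobedient}; this is straightforward since $\varphi$ is an order-reversing involution of $I$, it conjugates $\hat f$ to $\hat g$, and it sends connected components of $f^{-i}([0,\eta])$ to connected components of $g^{-i}([1-\eta,1])$. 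I expect the main obstacle, such as it is, to be simply stating cleanly that the theorem's disjunction is exclusive and orientation-determined, so that no case is left unaddressed; the topological content is entirely carried by Lemma~\ref{lem:main} and Proposition~\ref{prop:charpseudo-arc}.
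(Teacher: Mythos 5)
Your proposal is correct and follows essentially the same route as the paper: the theorem is reduced to Lemma~\ref{lem:main} and the conjugacy $\varphi(x)=1-x$ of Remark~\ref{rem:reverseobedient}, with the only additional ingredient being that a somewhere dense (hence, by closedness, interval-containing) set of fixed points forces $\hat I_f$ to contain the arc $\underleftarrow{\lim}(J,f|_J)$ for an interval $J$ of fixed points, contradicting crookedness via Proposition~\ref{prop:charpseudo-arc}. Your extra bookkeeping about which alternative matches which orientation is sound and only makes explicit what the paper leaves implicit.
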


\begin{proof}
	The proof of this theorem follows from Lemma~\ref{lem:main} and Remark~\ref{rem:reverseobedient} if we prove that provided $\mathrm{Fix}(f)$ is dense somewhere in $I$, then $\hat{I}_f$ is not the pseudo-arc.
	Say $\mathrm{Fix}(f)$ is dense somewhere in $I$, denote that set by $A\subset I$. Since $f$ is continuous, the closure $\mathrm{Cl}(A)\subset \Fix(f)$. Thus there exists an open interval $U\subset A$.  Fix a non-degenerate closed interval $J\subset U$. Inverse limit $\underleftarrow{\lim}(J,f|_{J})\subset \hat{I}_f$ is an arc which finishes the proof.
\end{proof}

\section{Acknowledgments}
I would like to thank Ana Anu\v si\' c, Iztok Bani\v c, Jan Boro\'nski and Piotr Oprocha for remarks that helped improve the exposition of the paper.
J. \v Cin\v c was partially supported by the Slovenian research agency ARRS grant J1-4632 and by European Union's Horizon Europe Marie Sk\l odowska-Curie grant HE-MSCA-PF- PFSAIL - 101063512.

\vspace{0.7cm}

\begin{table}[ht]
\begin{tabular}[t]{p{1.5cm}  p{10.5cm} }
\includegraphics [width=.09\textwidth]{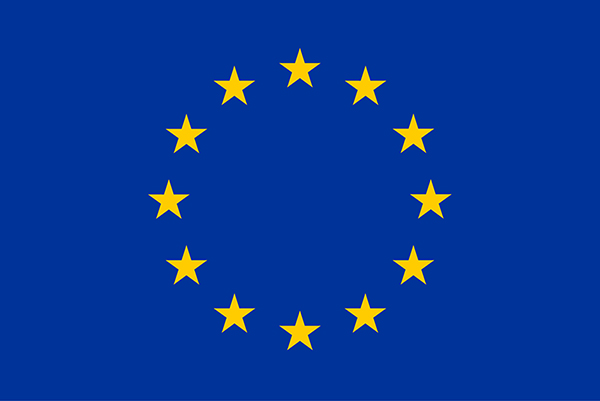} &
\vspace{-1cm}
This research is part of a project that has received funding from the European Union's Horizon Europe research and innovation programme under the Marie Sk\l odowska-Curie grant agreement No 101063512.\\
\end{tabular}
\end{table}

\end{document}